\documentclass{amsart}
\usepackage{graphicx} 
\usepackage{amsmath}
\usepackage{amssymb}
\usepackage{amsthm}
\usepackage{thmtools}
\usepackage{tikz-cd}
\usepackage{dirtytalk}
\usepackage{xcolor}
\usepackage[shortlabels]{enumitem}
\usepackage[english]{babel}

\usepackage{hyperref}
\usepackage{cleveref}

\numberwithin{equation}{section}

\newtheorem{alphathm}{Theorem}
\newtheorem{theorem}{Theorem}[section]
\newtheorem{lemma}[theorem]{Lemma}
\newtheorem{corollary}[theorem]{Corollary}
\newtheorem{proposition}[theorem]{Proposition}
\theoremstyle{definition}
\newtheorem{alphadef}[alphathm]{Definition}
\newtheorem{definition}[theorem]{Definition}
\newtheorem{example}[theorem]{Example}
\newtheorem*{ack}{Acknowledgements}
\theoremstyle{remark}
\newtheorem{remark}[theorem]{Remark}

\DeclareMathOperator{\Iso}{Iso}
\DeclareMathOperator{\Aut}{Aut}
\DeclareMathOperator{\Inn}{Inn}

\DeclareMathOperator{\Ad}{Ad}

\newcommand{\R}[0]{\mathbb{R}}
\newcommand{\C}[0]{\mathbb{C}}
\newcommand{\T}[0]{\mathbb{T}}
\newcommand{\M}[0]{\mathcal{M}}
\newcommand{\N}[0]{\mathcal{N}}
\newcommand{\U}[0]{\mathcal{U}}
\renewcommand{\H}[0]{\mathcal{H}}

\title[Non-locally trivial $\mathrm{W}^*$-bundles]{A non-locally trivial $\mathrm{W}^*$-bundle with fixed factorial fibres}
\author{Kiefer Mommaerts}
\thanks{This work was funded by the Carlsberg Foundation grant CF24-1144.}
\keywords{$\mathrm{W}^*$-bundles, Spectral gap, Property Gamma}
\subjclass[2020]{46L35}
\address{Department of  Mathematics and  Computer Science,
University of Southern Denmark, Campusvej 55, 5230 Odense, Denmark}
\email{kiefer@imada.sdu.dk}
\date{\today}

\begin{document}
\begin{abstract}
   In this paper we construct the first example of a non-locally trivial $\mathrm{W}^*$-bundle whose fibres are all isomorphic to some fixed $\mathrm{II}_1$ factor. This is achieved by introducing a notion of uniformly having spectral gap for $\mathrm{W}^*$-bundles. For bundles with fixed factorial fibres, the negation of having this uniform spectral gap property provides an obstruction for being locally trivial. This results in seemingly elementary examples of $\mathrm{W}^*$-bundles whose fibres are all isomorphic to some fixed factor but that are not locally trivial, even over spaces with covering dimension equal to zero.
\end{abstract}
\maketitle

\section{Introduction}
Tracially continuous $\mathrm{W}^*$-bundles were introduced by Ozawa in \cite[Section 5]{OzawaWbundle}. They provide an operator algebraic framework for bundles of tracial von Neumann algebras indexed over some compact Hausdorff space. The most elementary examples of $\mathrm{W}^*$-bundles are the trivial bundles, which are precisely those of the following form
\begin{equation}
    \label{eq:trivialbundle}
    C_\sigma(X,M) := \{f: X\to M\mid f\text{ is } \|\cdot\|\text{-bounded and } \|\cdot\|_{2,\tau}\text{-continuous}\}.
\end{equation}
Here $X$ is a compact Hausdorff space and $(M,\tau)$ a tracial von Neumann algebra.

Central to the theory are \emph{triviality results}, which give sufficient conditions on a $\mathrm{W}^*$-bundle $\M$ that force it to be trivial. Of particular interest is the question of whether or not every $\mathrm{W}^*$-bundle whose fibres are all isomorphic to the hyperfinite $\mathrm{II}_1$ factor $R$ is automatically trivial. This is due to its intimate ties with the Toms--Winter conjecture, see \cite[Section 1.6]{2024traciallycompletecalgebras}.

Already in \cite[Theorem 15]{OzawaWbundle} such a triviality theorem is presented, in contemporary terms this result can be stated as follows (modulo some separability assumptions): A $\mathrm{W^*}$-bundle $\M$ with all fibres isomorphic to $R$ is trivial if and only if it has the uniform property Gamma of \cite{unifGamma}; see also \cite{2024traciallycompletecalgebras}.

Later, in \cite{EvingtonPennig}, Evington and Pennig provided a more abstract framework for studying $\mathrm{W}^*$-bundles. One of the main advantages of this point of view is that it allows for a natural definition of the restriction $\M\lvert_Y$ of a $\mathrm{W}^*$-bundle $\M$ over $X$ to any closed subset $Y\subseteq X$. Using this construction Evington and Pennig give the definition of a \emph{locally trivial $\mathrm{W}^*$-bundle}, which are heuristically the bundles that locally look like \eqref{eq:trivialbundle}. Subsequently, Evington and Pennig classify locally trivial $\mathrm{W}^*$-bundles with all fibres isomorphic to some fixed $\mathrm{II}_1$ factor $M$ using topological methods. That is, they give a one to one correspondence between the isomorphism classes of such locally trivial $\mathrm{W}^*$-bundles and isomorphism classes of principal $\Aut M$-bundles. As a consequence, using that $\Aut R$ is contractible, they prove that any locally trivial bundle with all fibres isomorphic to $R$ already has to be trivial \cite[Proposition 4.7, Theorem 4.10]{EvingtonPennig}.

However, Evington and Pennig leave open the question of whether or not every $\mathrm{W}^*$-bundle with all fibres isomorphic to some fixed factor $M$ is necessarily locally trivial. In this paper we resolve this question to the negative with the following counterexample over a space of covering dimension equal to zero.
\begin{alphathm}[See \Cref{cor:theexample} and \Cref{ex:nonloctrivial}]\label{thm:mainresult}
    The $\mathrm{W}^*$-bundle over $\mathbb{N}\cup\{\infty\}$ with all fibres isomorphic to the free group factor $L\mathbb{F}_\infty$ given by
    \begin{equation}\label{eq:thebundle}
        \N :=\{f\in C_\sigma(\mathbb{N}\cup\{\infty\}, M_2(L\mathbb{F}_\infty))\mid f(\infty)\in L\mathbb{F}_\infty\subseteq M_2(L\mathbb{F}_\infty)\},
    \end{equation}
    is not locally trivial.
\end{alphathm}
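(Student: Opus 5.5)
The plan is to use the uniform spectral gap property as an invariant that distinguishes $\N$, near the point $\infty$, from a trivial bundle. First I fix a precise form of this property for a $\mathrm{W}^*$-bundle $\M$ over a compact space $Y$, with uniform $2$-norm $\|\cdot\|_{2,u}$. The bundle $\M$ \emph{uniformly has spectral gap} if there are a finite set $F\subseteq\M$ and a constant $C>0$ such that every $x$ in the unit ball of $\M$ satisfies
\[
\|x - E(x)\|_{2,u}\le C\max_{a\in F}\|[x,a]\|_{2,u}.
\]
Here $E$ denotes the fibrewise centre-valued trace. Since all fibres below are factors, this is just $x\mapsto(y\mapsto\tau_y(x(y))1)$, with values in $C(Y)\subseteq\M$. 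This property is preserved under isomorphisms of bundles over the same base, because isomorphisms preserve $\|\cdot\|_{2,u}$, commutators and the centre.

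\emph{Step 1: trivial bundles with full factors have uniform spectral gap.} Let $M$ be a $\mathrm{II}_1$ factor without property Gamma, such as $L\mathbb{F}_\infty$. By Connes' spectral gap characterisation there are finitely many unitaries $u_1,\dots,u_k\in M$ and a constant $C$ with $\|\xi-\tau(\xi)1\|_2\le C\max_i\|[\xi,u_i]\|_2$ for all $\xi\in M$. Take $F$ to be the constant sections $u_i$ in $C_\sigma(Y,M)$. For $x\in C_\sigma(Y,M)$, apply the inequality pointwise at each $y$ and then take the supremum over $y$. This shows that $C_\sigma(Y,M)$ uniformly has spectral gap. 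Consequently, if $\N$ were locally trivial, then for some $N$ the restriction $\N|_{Y_N}$, where $Y_N=\{n\ge N\}\cup\{\infty\}$, would be isomorphic to $C_\sigma(Y_N,L\mathbb{F}_\infty)$ and would therefore uniformly have spectral gap. I also need to check that the restriction in the sense of Evington--Pennig is exactly $\{f|_{Y_N}: f\in\N\}$ with the induced norms. For this concrete bundle that should be immediate.

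\emph{Step 2: $\N|_{Y_N}$ fails uniform spectral gap.} Fix any finite $F\subseteq\N|_{Y_N}$ and any $\varepsilon>0$. Each $a\in F$ is $\|\cdot\|_{2}$-continuous at $\infty$, and $a(\infty)=b_a\oplus b_a$ is diagonal in $M_2(L\mathbb{F}_\infty)$. So I can choose $n\ge N$ with $\|a(n)-a(\infty)\|_2<\varepsilon$ for all $a\in F$. Define $x\in\N$ by $x(n)=e_{11}\otimes1-\tfrac12$ and $x(m)=0$ for $m\neq n$, including $m=\infty$. This is a legitimate section, since $n$ is isolated and $x(\infty)=0$ lies in $L\mathbb{F}_\infty$. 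Because $e_{11}\otimes 1$ commutes with $b\oplus b$, we get
\[
\|[x,a]\|_{2,u}=\|[x(n),a(n)]\|_2\le 2\|a(n)-a(\infty)\|_2<2\varepsilon .
\]
On the other hand, $\tau(x(n))=0$, so $\|x-E(x)\|_{2,u}=\|x(n)\|_2=\tfrac12$. Letting $\varepsilon\to0$ contradicts the inequality for every choice of $C$. Hence no restriction of $\N$ to a neighbourhood of $\infty$ is trivial, so $\N$ is not locally trivial.

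It remains to note that $\N$ is a $\mathrm{W}^*$-bundle with all fibres $L\mathbb{F}_\infty$. The fibre at $n$ is $M_2(L\mathbb{F}_\infty)\cong L\mathbb{F}_\infty$, because the fundamental group of $L\mathbb{F}_\infty$ is $\R_{>0}$. The bundle axioms are inherited from $C_\sigma$, since the condition at $\infty$ is preserved by the operations and the trace is taken fibrewise.

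I expect the main obstacle to be Step 1 together with the invariance under isomorphism. The issue is that a local trivialisation is only an abstract isomorphism $\N|_{Y_N}\cong C_\sigma(Y_N,L\mathbb{F}_\infty)$ of bundles. I need to check that such a map is $C(Y_N)$-linear, or at least covers a homeomorphism of the base, so that it respects fibrewise traces, centres and hence $E$. I also need to check that Connes' theorem really gives a \emph{finite} set with a \emph{uniform} constant, which holds for full factors such as $L\mathbb{F}_\infty$.
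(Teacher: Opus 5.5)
Your proof is correct, and you use the same obstruction as the paper. That obstruction is a spectral gap property which trivial bundles with full fibre inherit by taking the supremum of the fibrewise inequality over the base. $\N$ violates it because the diagonal inclusion $L\mathbb{F}_\infty\subseteq M_2(L\mathbb{F}_\infty)$ has relative commutant $M_2(\C)\ni e_{11}\otimes 1$. Where you differ is in how directly you implement it.

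The paper proves the global statement that every locally trivial factorial bundle with full fibres has uniform $w$-spectral gap (\Cref{cor:locallytriv}). This needs the pointwise reformulation \Cref{prop:equivchars}(ii) and the compactness gluing of \Cref{prop:localityofspgap}. The paper then shows that $\N$ fails the property via ultraproducts. Along $x_n=n\to\infty$, \Cref{lemma:techincallemma} identifies $\iota^\omega(\N)'\cap\prod^\omega(\N,x_n)$ with $N'\cap M^\omega$, which is not $\C1$.

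You work only on one closed neighbourhood of $\infty$, so you need no gluing. Your section supported at the single isolated point $n$, with value $e_{11}\otimes 1-\tfrac12$, is a finite-stage representative of that nontrivial ultraproduct element. The paper's machinery buys the full characterization in \Cref{thm:theonlyone}. Its converse direction (irreducibility plus $w$-spectral gap of $N\subseteq M$ implies uniform $w$-spectral gap) genuinely uses the ultraproduct formulation. Your route buys a short, self-contained proof of this particular theorem.

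The linear, Connes-type inequality is more than you need. Your witness has $\|x-E(x)\|_{2,u}=\tfrac12$ while all commutators with $F$ become arbitrarily small. So it already defeats the $\varepsilon$--$\delta$ property of \Cref{def:uniwspectralgap}. For Step 1, the $\varepsilon$--$\delta$ form of fullness, as in \Cref{ex:trivialfull}, therefore suffices, and you can avoid invoking Connes' theorem.

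The obstacles you flag are harmless.
\begin{enumerate}[(i)]
\item An isomorphism $\alpha$ in the paper's sense satisfies $E_2\circ\alpha=\alpha\circ E_1$ and restricts to an automorphism of $C(Y)$. Hence $\|\alpha(a)\|_{2,Y}=\|a\|_{2,Y}$, and your inequality transfers with $F$ replaced by $\alpha^{-1}(F)$.
\item The same identities show that $\alpha$ carries each $\tau_y$ to $\tau_{h(y)}$ for the induced homeomorphism $h$ of the base. So the model fibre is isomorphic to $L\mathbb{F}_\infty$ and in particular is full.
\item Since $h$ need not preserve $Y_N$, do not shrink the given neighbourhood $Y$ of $\infty$ to $Y_N$. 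Instead, run Step 2 directly on $Y$ itself, which contains all $n\geq N$ for some $N$.
\end{enumerate}
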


We prove \Cref{thm:mainresult} by introducing the following property for factorial $\mathrm{W}^*$-bundles, which can be thought of as a uniform version of fullness for factors.
\begin{alphadef}[{See \Cref{def:uniwspectralgap}}]
    A factorial $\mathrm{W}^*$-bundle $\M$ over $X$ has \emph{uniform w-spectral gap} if: for every $\varepsilon>0,$ there is a finite subset $ F$ of $\U(\M)$ and $\delta>0$ such that for $a\in (\M)_1$ we have
    \begin{equation}
        \max_{u\in F}\|[u,a]\|_{2,X}<\delta\implies \|a-E_{C(X)}(a)\|_{2,X}<\varepsilon.
    \end{equation}
\end{alphadef}
Subsequently we strengthen the analogy between full factors and factorial $\mathrm{W}^*$-bundles with uniform $w$-spectral gap, by giving several equivalent characterizations of the above property, including one in terms of central sequence algebras.

Thereafter, we prove that any locally trivial $\mathrm{W}^*$-bundle with non-Gamma fibres has uniform $w$-spectral gap (\Cref{prop:localityofspgap}). As a consequence, for factorial $\mathrm{W}^*$-bundles with non-Gamma fibres the negation of having uniform $w$-spectral gap forms an obstruction for being locally trivial. This obstruction is used to prove \Cref{thm:mainresult}. In fact we obtain the following more general result.
\begin{alphathm}[{cf. \Cref{thm:theonlyone}}]\label{thm:mainresult2}
    Let $N\subseteq M$ be finite factors with separable predual and consider the subtrivial bundle
    \begin{equation}
        \N =\{f\in C_\sigma(\mathbb{N}\cup \{\infty\}, M)\mid f(\infty)\in N\}.
    \end{equation}
    Then $\N$ has uniform $w$-spectral gap if and only if $N'\cap M = \C1$ and $N$ has $w$-spectral gap in $M$ in the sense of Popa \cite{PopaSpgap}.
\end{alphathm}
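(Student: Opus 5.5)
The plan is to compare the two conditions fibre by fibre. The key features are that every point $n\in\mathbb{N}$ is isolated in $X=\mathbb{N}\cup\{\infty\}$, and that constant sections with values in $N$ belong to $\N$. Throughout I use $\|a\|_{2,X}=\sup_{x\in X}\|a(x)\|_{2}$ and $E_{C(X)}(a)(x)=\tau(a(x))1$. Here $\tau$ is the trace of $M$, which restricts to the unique trace of the finite factor $N$, so the fibre at $\infty$ carries the restricted trace. I use Popa's $w$-spectral gap in the form: for every $\varepsilon>0$ there are a finite $F_0\subseteq\U(N)$ and $\delta_0>0$ such that every $x\in(M)_1$ with $\max_{v\in F_0}\|[v,x]\|_2<\delta_0$ satisfies $\|x-E_{N'\cap M}(x)\|_2<\varepsilon$. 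When $N'\cap M=\C1$ the conditional expectation $E_{N'\cap M}$ is just $\tau(\cdot)1$.

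\emph{($\Leftarrow$)} Suppose $N'\cap M=\C1$ and $N$ has $w$-spectral gap in $M$. Fix $\varepsilon>0$ and take $F_0,\delta_0$ as above. For each $v\in F_0$, let $u_v\in\U(\N)$ be the constant section $u_v\equiv v$; it lies in $\N$ because $v\in N\subseteq M$. Set $F=\{u_v : v\in F_0\}$ and $\delta=\delta_0$. Let $a\in(\N)_1$ with $\max_{u\in F}\|[u,a]\|_{2,X}<\delta$. Then for every $x\in X$ the fibre $a(x)$ is in $(M)_1$ (at $x=\infty$ it even lies in $N$) and satisfies $\|[v,a(x)]\|_2<\delta_0$ for all $v\in F_0$. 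Hence $\|a(x)-\tau(a(x))1\|_2<\varepsilon$ for every $x$. Taking the supremum gives $\|a-E_{C(X)}(a)\|_{2,X}\le\varepsilon$, which suffices after replacing $\varepsilon$ by $\varepsilon/2$.

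\emph{($\Rightarrow$)} Suppose $\N$ has uniform $w$-spectral gap. Fix $\varepsilon>0$ and take a finite $F\subseteq\U(\N)$ and $\delta>0$ from the definition. Put $F_0=\{u(\infty): u\in F\}\subseteq\U(N)$; these are unitaries because evaluation at $\infty$ is a $*$-homomorphism onto the fibre. Let $x\in(M)_1$ with $\max_{u\in F}\|[u(\infty),x]\|_2<\delta/2$. By $\|\cdot\|_2$-continuity of the sections at $\infty$, choose $n_0\in\mathbb{N}$ with $\|u(n_0)-u(\infty)\|_2<\delta/4$ for all $u\in F$. Define the section $a$ by $a(n_0)=x$ and $a(y)=0$ for $y\neq n_0$. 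Since $n_0$ is isolated, $a$ is $\|\cdot\|_2$-continuous, and since $a(\infty)=0\in N$, we have $a\in(\N)_1$. The commutator $[u,a]$ vanishes off $n_0$, and at $n_0$
\begin{equation*}
\|[u(n_0),x]\|_2\le\|[u(\infty),x]\|_2+2\|u(n_0)-u(\infty)\|_2\|x\|<\delta .
\end{equation*}
So uniform $w$-spectral gap gives $\|x-\tau(x)1\|_2\le\|a-E_{C(X)}(a)\|_{2,X}<\varepsilon$. This is a spectral gap statement with $F_0\subseteq\U(N)$ and constant $\delta/2$, with $\tau(\cdot)1$ in place of $E_{N'\cap M}$. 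Applying it to $x\in(N'\cap M)_1$, which commutes exactly with $F_0$, gives $\|x-\tau(x)\|_2<\varepsilon$ for all $\varepsilon$. Hence $N'\cap M=\C1$, so $\tau(\cdot)1=E_{N'\cap M}$, and the estimate is exactly Popa's $w$-spectral gap of $N$ in $M$.

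I do not expect a serious obstacle. The step needing the most care is in ($\Rightarrow$). One must check that the section supported at the single isolated point $n_0$ genuinely belongs to $\N$ (bounded, $\|\cdot\|_2$-continuous, correct value at $\infty$). One must also control the commutator at $n_0$ using only the closeness of $u(n_0)$ to $u(\infty)$ in $2$-norm. The operator-norm bound $\|x\|\le1$ makes the estimate $\|[u(n_0)-u(\infty),x]\|_2\le2\|u(n_0)-u(\infty)\|_2$ valid.
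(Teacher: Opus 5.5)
Your proof is correct, and it takes a genuinely different, more elementary route than the paper.

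\textbf{The paper's route.} The paper deduces the statement from \Cref{thm:theonlyone} with $X=\mathbb{N}\cup\{\infty\}$ and $Y=\{\infty\}$, and it works with ultraproducts throughout.
\begin{itemize}
\item For necessity, it takes the sequence $x_n=n\to\infty$. Via \Cref{lemma:techincallemma}(1) it identifies $\prod^\omega_n(\N,n)$ with $M^\omega$ and the diagonal copy of $\N$ with $N$. It then gets $N'\cap M^\omega=\C1$ from \Cref{prop:equivchars}(iv), and needs the separable predual (\Cref{prop:sepultrafilber}) to convert this into $w$-spectral gap.
\item For sufficiency, it shows that every relative commutant $\iota^\omega(\N)'\cap\prod^\omega(\N,x_i)$ is trivial, using \Cref{lemma:techincallemma} and \Cref{prop:wspecgapincl}. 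When $\lim_\omega x_i\in\mathbb{N}$ it appeals to fullness of $M$.
\end{itemize}

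\textbf{Your route.} You work directly with the $\varepsilon$--$\delta$ definitions. For sufficiency you use constant sections with values in $F_0\subseteq\U(N)$. For necessity you use a section supported at a single isolated point $n_0$ near $\infty$. This is a hands-on version of \Cref{lemma:techincallemma}(1) combined with the localisation trick from \Cref{prop:equivchars}(i)$\Rightarrow$(ii).

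\textbf{What your route buys.} It avoids ultraproducts and never uses the separable-predual hypothesis. So it shows that hypothesis is superfluous here, including for the necessity of $w$-spectral gap, which the paper only obtains under separability. It also needs no fullness assumption on $M$, which \Cref{thm:theonlyone} assumes but the statement does not. The reason is that irreducibility plus $w$-spectral gap of $N$ in $M$ already controls all fibre values $a(x)\in(M)_1$ at once.

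\textbf{What the paper's route buys.} It handles arbitrary compact metrisable $X$ and non-clopen closed $Y$ uniformly, and it fits the central-sequence picture of \Cref{sec:spectralgap}. Your necessity argument relies on $\infty$ being a limit of isolated points. It does adapt to the general case: replace the bump section by a contractive lift through the surjection $\mathrm{eval}_z\colon\N\to M$ at a point $z\notin Y$ close to a non-interior point of $Y$, and use \Cref{prop:equivchars}(ii).
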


\emph{Outline.} The paper is structured as follows. \Cref{sec:Prelims} contains some preliminaries about $\mathrm{W}^*$-bundles, ultraproducts and inclusions of von Neumann algebras needed in the sequel. In \Cref{sec:spectralgap} we give the definition of having uniform $w$-spectral gap and prove some of its basic properties and consequences, especially that it is a local property. The final section, \Cref{sec:construction}, contains the main results of the paper. Here we give a complete characterization of when a subtrivial $\mathrm{W}^*$-bundle which has as fibres either a fixed non-Gamma factor $M$ or a fixed non-Gamma subfactor $N\subseteq M$ has uniform $w$-spectral gap, cf. \Cref{thm:theonlyone}. Finally we use this characterization to prove \Cref{thm:mainresult} and \Cref{thm:mainresult2}.
\begin{ack} 
    The author would like to thank Matteo Pagliero for useful comments on the manuscript and his supervisor, Jamie Gabe, for many fruitful discussions, useful suggestions regarding the paper, and especially for introducing the author to the theory of $\mathrm{W}^*$-bundles.
\end{ack}

\section{Preliminaries}\label{sec:Prelims}
\subsection{$\mathrm{W^*}$-bundles}
In this section we recall some basic results concerning tracially continuous $\mathrm{W}^*$-bundles (from now on called $\mathrm{W}^*$-bundles). As a main reference for $\mathrm{W}^*$-bundles we use \cite{Evington_thesis}.
\begin{definition}[{\cite[Section 5]{OzawaWbundle}}]
    \label{def:w*bundle}
    A \emph{$\mathrm{W}^*$-bundle over a compact Hausdorff space $X$} is a unital $\mathrm{C}^*$-algebra $\M$ equipped with a unital embedding of $C(X)$ into the centre of $\M$ and a faithful tracial\footnote{That is $E(ab) = E(ba)$ for every $a,b\in\M$.} conditional expectation $E: \M\to C(X)$ such that the norm-unit ball of $\M$ is complete with respect to the following \emph{uniform 2-norm}
    \begin{equation}
        \label{eq:uniform2norm}
        \|a\|_{2,X} := \|E(a^*a)\|^\frac{1}{2}.
    \end{equation}
    Two $\mathrm{W}^*$-bundles $\M_i$ over $X$ with expectation $E_i$ for $i=1,2$ respectively, are \emph{isomorphic} if there is a $*$-isomorphism $\alpha: \M_1\to \M_2$ such that $\alpha(C(X)) = C(X)$ and $E_2\circ \alpha = \alpha\circ E_1$.
\end{definition}
Given a $\mathrm{W}^*$-bundle $\M$ over $X$, each point $x\in X$ gives rise to a tracial state $\tau_x := \text{eval}_x\circ E$ on $\M$. Here $\text{eval}_x: C(X)\to \C$ is evaluation at $x$. Each such trace $\tau_x$ defines a 2-seminorm on $\M$ by the formula $\|a\|_{2,\tau_x} = \tau_x(a^*a)^\frac{1}{2}, a\in \M$. This relates to the norm defined in \eqref{eq:uniform2norm} as follows (and explains the name uniform 2-norm)
\begin{equation}\label{eq:unif2normspatial}
    \|a\|_{2,X} = \sup_{x\in X}\|a\|_{2,\tau_x}.
\end{equation}
For $x\in X$, denote by $\pi_{\tau_x}: \M\to B(L^2(\M,\tau_x))$ the GNS-representation with respect to $\tau_x$. Recall also that $\tau_x$ induces a normal faithful tracial state on $\pi_{\tau_x}(\M)''$, which by abuse of notation we also denote by $\tau_x$. As such, $(\pi_{\tau_x}(\M)'',\tau_x)$ is a tracial von Neumann algebra.

By \cite[Theorem 11]{OzawaWbundle} the co-restriction $\pi_{\tau_x} : \M\to \pi_{\tau_x}(\M)''$ is surjective (see also \cite[Theorem 3.2.9]{Evington_thesis} for the case where $X$ is not necessarily metrisable). As such the image $\pi_{\tau_x}(\M) =\pi_{\tau_x}(\M)''$ is already a tracial von Neumann algebra which we call the \emph{fibre of $\M$ at $x$} and denote by $\M_x$. Notice also that since $\tau_x$ is a tracial state, the $*$-homomorphism $\pi_{\tau_x}$ uniquely factors through a $*$-isomorphism
\begin{equation}
    \label{eq:abstractfibre}
    \M/I_x \cong \pi_{\tau_x}(\M)'',
\end{equation}
where $I_x = \{a\in \M\mid \tau_x(a^*a)=0\}$ is the kernel ideal of the tracial state $\tau_x$.
A $\mathrm{W}^*$-bundle is \emph{factorial} if all its fibres are factors.
\newline

Next, we introduce the two main examples of $\mathrm{W}^*$-bundles considered in this paper. The most basic examples are the so-called trivial $\mathrm{W}^*$-bundles, which were already mentioned in the introduction. Let $(M,\tau)$ be a tracial von Neumann algebra and $X$ a compact Hausdorff space. The \emph{trivial $\mathrm{W}^*$-bundle over $X$ with fibre $M$} is defined as
\begin{equation}
    \label{eq:trivwbundle}
    C_\sigma(X,M) = \{f: X\to M\mid  f \text{ is } \|\cdot\|\text{-bounded and } \|\cdot\|_{2,\tau}\text{-continuous}\}.
\end{equation}
Here the central embedding $\iota:C(X)\to C_\sigma(X,M)$ and expectation $E: C_\sigma(X,M)\to C(X)$ are given by
\begin{align}
     &\iota(f)(x) = f(x)1_M, &&f\in C(X), x\in X \text{  and,} \\&E(a)(x) = \tau(a(x)), &&a\in C_\sigma(X,M) , x\in X.
\end{align}

The second class of examples are \emph{subtrivial $\mathrm{W}^*$-bundles}. Given some trivial $\mathrm{W}^*$-bundle $C_\sigma(X,M)$ and a family $\{N_x\}_{x\in X}$ of von Neumann subalgebras of $M$. Then
\begin{equation}
    \N = \{f\in C_\sigma(X,M)\mid f(x)\in N_x, \forall x\in X\},
\end{equation}
is the \emph{subtrivial $\mathrm{W}^*$-bundle determined by $\{N_x\}_{x\in X}$}.
It is easy to see that indeed $\N$ is a $\mathrm{W}^*$-bundle over $X$ with respect to the embedding and expectation inherited from $C_\sigma(X,M)$, cf. \cite[Example 3.1.9]{Evington_thesis}.

Denote now by $\mathrm{eval}_x : \N\to M$ the $*$-homomorphism obtained by evaluating at $x\in X$. The isomorphism \eqref{eq:abstractfibre} implies that the fibre $\N_x$ is isomorphic to $\mathrm{eval}_x(\N)$.
It is immediate that $\mathrm{eval}_x(\N)\subseteq N_x$. The following result gives a characterization of when equality occurs, and hence of when the fibres of $\N$ are indeed given by the family $\{N_x\}_{x\in X}$.
\begin{proposition}[{\cite[Proposition 3.1.10]{Evington_thesis}}]
    \label{prop:subtrivialbundles}
    Let $\M = C_\sigma(X,M)$ be a trivial $\mathrm{W}^*$-bundle and $\N$ the subtrivial bundle defined by the family of von Neumann subalgebras $\{N_x\}_{x\in X}$. Then the following are equivalent:
    \begin{enumerate}[(i)]
        \item For all $x\in X$, $\mathrm{eval}_x(\N) = N_x$, (and hence $\N_x\cong N_x$).
        \item For all $b\in M$, the map $x\mapsto \mathrm{dist}_{\|\cdot\|_{2,\tau}}(b,N_x)$ is upper-semicontinuous, where $\mathrm{dist}_{\|\cdot\|_{2,\tau}}(b,N_x) = \inf\{\|b-c\|_{2,\tau}\mid c\in N_x\}$.
    \end{enumerate}
\end{proposition}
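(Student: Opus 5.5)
The plan is to prove the two implications separately, working throughout with the Hilbert space picture: fix $x\in X$ and consider the closed subspace $L^2(N_x)\subseteq L^2(M,\tau)$ with orthogonal projection $e_{N_x}$. Then $\mathrm{dist}_{\|\cdot\|_{2,\tau}}(b,N_x)=\|b-E_{N_x}(b)\|_{2,\tau}$, where $E_{N_x}$ is the trace-preserving conditional expectation onto $N_x$. The infimum is attained at $E_{N_x}(b)\in N_x$, and $\|E_{N_x}(b)\|\le\|b\|$.

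\textbf{(i)$\Rightarrow$(ii).} Fix $b\in M$, a point $x_0$, and $\varepsilon>0$. Set $c=E_{N_{x_0}}(b)\in N_{x_0}$. By (i) there is $f\in\N$ with $f(x_0)=c$. Since $f$ is $\|\cdot\|_{2,\tau}$-continuous, there is a neighbourhood $U$ of $x_0$ on which $\|f(x)-c\|_{2,\tau}<\varepsilon$. For $x\in U$ we have $f(x)\in N_x$, so
\[
\mathrm{dist}(b,N_x)\le \|b-f(x)\|_{2,\tau}<\mathrm{dist}(b,N_{x_0})+\varepsilon .
\]
This is exactly upper semicontinuity at $x_0$.

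\textbf{(ii)$\Rightarrow$(i).} Fix $x_0$ and $c\in N_{x_0}$; we may assume $\|c\|\le 1$. The goal is to build $f\in\N$ with $f(x_0)=c$. The idea is to take pointwise expectations: define $f(x)=E_{N_x}(c)$. This is norm-bounded by $\|c\|$, satisfies $f(x)\in N_x$, and gives $f(x_0)=c$. The problem is continuity, which need not hold globally, since (ii) only gives upper semicontinuity. I would therefore construct $f$ by successive approximation.

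By (ii) applied to $b=c$, for each $\varepsilon$ there is an open $U\ni x_0$ with $\mathrm{dist}(c,N_x)<\varepsilon$ on $U$. The key estimate is to produce elements $g\in\N$ that are close to $c$ near $x_0$. I would use a partition of unity. For each point $y$, the map $x\mapsto\|E_{N_y}(c)-E_{N_x}(E_{N_y}(c))\|_2=\mathrm{dist}(E_{N_y}(c),N_x)$ is upper semicontinuous and vanishes at $y$. Hence it is small on a neighbourhood $V_y$ of $y$. Even so, $E_{N_x}(E_{N_y}c)$ is not obviously continuous in $x$, which is the main obstacle.

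The first thing I would try instead is an Evington-style argument. Show that $\mathrm{eval}_{x_0}(\N)$ is $\|\cdot\|_2$-dense in $N_{x_0}$ on bounded sets. Then use that the unit ball of $\N$ is $\|\cdot\|_{2,X}$-complete, together with a Kaplansky-type/Cauchy sequence argument, to conclude that $\mathrm{eval}_{x_0}$ maps the unit ball of $\N$ onto the unit ball of $N_{x_0}$. The completion step is standard: it works once we have the approximation lemma with norm control, namely that for $\|c\|\le1$ and $\varepsilon>0$ there is $g\in(\N)_1$ with $\|g(x_0)-c\|_2<\varepsilon$. Applying the lemma to the successive errors $c-g_1(x_0)$, and so on, with geometric tolerances gives the Cauchy sequence.

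For the approximation lemma itself, the plan is as follows. Construct $g$ locally on finitely many open sets covering $X$. Use the constant function $E_{N_y}(c)$ near each $y$; by upper semicontinuity this lies within $\varepsilon$ of $N_x$ for $x$ near $y$. Glue these constants with a partition of unity to obtain a continuous $h\in C_\sigma(X,M)$. By construction $\mathrm{dist}(h(x),N_x)<\varepsilon$ for all $x$, and $h(x_0)$ is close to $c$.

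The remaining task is to correct $h$ into a genuine section of $\N$, i.e. to show that the set of $h$ with $\sup_x \mathrm{dist}(h(x),N_x)$ small can be pushed into $\N$. I would attempt this by iterating: apply the same construction to $h$ in place of $c$, with tolerance $\varepsilon/2$ and so on, obtaining a $\|\cdot\|_{2,X}$-Cauchy bounded sequence. Its limit lies in $C_\sigma(X,M)$ and has $\mathrm{dist}(\cdot,N_x)=0$ everywhere, so it lies in $\N$. Verifying that these successive corrections are summable is the crux.
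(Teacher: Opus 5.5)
Your proof of (i)$\Rightarrow$(ii) is correct. For (ii)$\Rightarrow$(i), however, you stop exactly where the work is: you call the summability ``the crux'' and do not verify it. Several of the surrounding steps also fail as written.

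\begin{itemize}
\item Gluing the constants $E_{N_y}(c)$ does not make $h(x_0)$ close to $c$. The point $x_0$ may lie in $V_{y_j}$ for some $y_j$ far from $x_0$. Then $E_{N_{y_j}}(c)$ is close to $N_{x_0}$ but possibly far from $c$. For example, if $N_{y_j}=\C1$ and $\tau(c)=0$, then $E_{N_{y_j}}(c)=0$, and nothing prevents $V_{y_j}=X$.
\item The completion step ``apply the lemma to the successive errors $c-g_1(x_0)$'' controls each correction only at $x_0$. It bounds neither $\|g_k\|_{2,X}$ nor the operator norms of the partial sums. So you have no bounded $\|\cdot\|_{2,X}$-Cauchy sequence, and completeness of the unit ball cannot be invoked.
\item ``The same construction with $h$ in place of $c$'' must be modified, since $h$ is not constant. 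The estimate that gives convergence is never stated.
\end{itemize}

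The gap closes by pinning $x_0$ and iterating. This also makes your separate density/Kaplansky step unnecessary.

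Start with $h_0\equiv c$. Given $h_k\in C_\sigma(X,M)$ with $\|h_k\|\le\|c\|$ and $h_k(x_0)=c$, put $b_y=E_{N_y}(h_k(y))$. Choose an open $V_y\ni y$ such that:
\begin{itemize}
\item $\mathrm{dist}(b_y,N_x)<2^{-k-1}$ for $x\in V_y$, by upper semicontinuity at $y$, where this distance is $0$;
\item $\|h_k(x)-h_k(y)\|_2<2^{-k}$ for $x\in V_y$;
\item $x_0\notin V_y$ whenever $y\neq x_0$, which is possible since $\{x_0\}$ is closed.
\end{itemize}
Take a finite subcover and a subordinate partition of unity $(\phi_j)$; necessarily $\phi_{x_0}(x_0)=1$. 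Set $h_{k+1}=\sum_j\phi_j b_{y_j}$.

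Then $\|h_{k+1}\|\le\|c\|$ and $h_{k+1}(x_0)=c$. Convexity of $\mathrm{dist}(\cdot,N_x)$ gives $\mathrm{dist}(h_{k+1}(x),N_x)\le 2^{-k-1}$ for all $x$. For $k\ge 1$,
\[
\|h_{k+1}(x)-h_k(x)\|_2\le\sum_j\phi_j(x)\bigl(\mathrm{dist}(h_k(y_j),N_{y_j})+\|h_k(y_j)-h_k(x)\|_2\bigr)\le 2^{1-k}.
\]
Hence $(h_k)$ is bounded and $\|\cdot\|_{2,X}$-Cauchy. Its limit $h\in C_\sigma(X,M)$ satisfies $h(x_0)=c$, $\|h\|\le\|c\|$ and $\mathrm{dist}(h(x),N_x)=0$. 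So $h(x)=E_{N_x}(h(x))\in N_x$ for every $x$, i.e.\ $h\in\N$. The norm bound is exactly the lift with $\|b_i\|\le\|a_i\|$ that the paper uses in the proof of \Cref{lemma:techincallemma}.
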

\begin{example}\label{ex:subtrivial}
    Suppose that $N\subseteq M$ is a von Neumann subalgera and $Y\subseteq X$ is a closed set. Then the family $\{N_x\}_{x\in X}$ defined by 
    \begin{equation}
        N_x = \begin{cases}
            N \quad\text{ if } x\in Y,\\
            M \quad \text{otherwise}.
        \end{cases}
    \end{equation}
    satisfies the conditions of \Cref{prop:subtrivialbundles}. The subtrivial bundles obtained in this way will play an important role in \Cref{sec:construction}.
\end{example}

The final background material on $\mathrm{W}^*$-bundles needed in this paper is the theory of restrictions and local triviality as introduced by Evington and Pennig in \cite{EvingtonPennig}. 
Let $\M$ be a $\mathrm{W}^*$-bundle over $X$ and $Y\subseteq X$ a closed subset. Evington and Pennig define \emph{$\M\lvert_Y$ the restriction of $\M$ to $Y$} to be the quotient $\M/I_Y$, where $I_Y = \{a\in\M\mid E(a^*a)(y) = 0, \text{ for all } y\in Y\}$, cf. \cite[Definition 2.8]{EvingtonPennig}. Furthermore, they prove in \cite[Proposition 2.9]{EvingtonPennig} that $\M\lvert_Y$ is a $\mathrm{W}^*$-bundle over $Y$ where the central embedding $C(Y)\to \M\lvert_Y$ and expectation $E\lvert_Y: \M\lvert_Y\to C(Y)$ are the unique  maps that make the following diagrams commutative.
\begin{equation}\label{eq:restrictiondiagrams1}
    \begin{tikzcd}
	\M && {\M\lvert_Y} \\
	\\
	{C(X)} && {C(Y)}
	\arrow[two heads, from=1-1, to=1-3]
	\arrow[hook, from=3-1, to=1-1]
	\arrow[two heads, from=3-1, to=3-3]
	\arrow[hook, from=3-3, to=1-3]
\end{tikzcd}
\end{equation}
and 
\begin{equation}
    \label{eq:restriciondiagrams2}
    \begin{tikzcd}
	\M && {\M\lvert_Y} \\
	\\
	{C(X)} && { C(Y)}
	\arrow[two heads, from=1-1, to=1-3]
	\arrow["E"', from=1-1, to=3-1]
	\arrow["{E\lvert_Y}"', from=1-3, to=3-3]
	\arrow[two heads, from=3-1, to=3-3]
\end{tikzcd}
\end{equation}
Here the bottom $*$-homomorphisms are given by restricting $f\in C(X)$ to $Y$.
Notice that from the above diagram \eqref{eq:restriciondiagrams2} it is immediate that for every $y\in Y$ also the following diagram commutes.
\begin{equation}
    \label{eq:tracediagram}
    \begin{tikzcd}
	\M && {\M\lvert_Y} \\
	{} & {} \\
	& \C
	\arrow[two heads, from=1-1, to=1-3]
	\arrow["{\mathrm{eval}_y\circ E}"', from=1-1, to=3-2]
	\arrow["{\mathrm{eval}_y\circ E\lvert_Y}", from=1-3, to=3-2]
\end{tikzcd}
\end{equation}
Hence by abuse of notation, we denote the traces that $y$ induces on $\M$ and $\M\lvert_Y$ both by $\tau_y$. This combined with \eqref{eq:abstractfibre}, shows that the fibres $\M_y$ and $(\M\lvert_Y)_y$ are canonically isomorphic.

Using restrictions of $\mathrm{W}^*$-bundles, one can define \emph{local properties} of $\mathrm{W^*}$-bundles. The following is fundamental in that sense.
\begin{definition}[{\cite[Definition 2.10]{EvingtonPennig}}]
    \label{def:locallytrivialw*bundle}
    A $\mathrm{W}^*$-bundle $\M$ over $X$ is \emph{locally trivial} if for every $x\in X$ there is closed neighbourhood $Y$ of $x$ such that $\M\lvert_Y$ is isomorphic to a trivial $\mathrm{W}^*$-bundle over $Y$ (as $\mathrm{W}^*$-bundles).
\end{definition}
\subsection{Tracial ultraproducts} In this section we introduce a notion of tracial ultraproduct that will be crucial in giving an algebraic characterization of the spectral gap property for $\mathrm{W}^*$-bundles that will be introduced in \Cref{sec:spectralgap}. The ultraproducts defined in this sections are particular instances of the tracial ultraproducts as considered in \cite[Section 5]{2024traciallycompletecalgebras}. However, for the sake of brevity, we only introduce them in the generality necessary for the subsequent results.
\begin{definition}\label{def:ultraproduct}
    Let $I$ be a non-empty directed set and $\omega$ a cofinal ultrafilter on $I$. Suppose that $\{\M_i\}_{i\in I}$ is a family of $\mathrm{W}^*$-bundles respectively over the compact Hausdorff spaces $\{X_i\}_{i\in I}$ and suppose that $\{Y_i\}_{i\in I}$ is a family of non-empty closed subsets $Y_i\subseteq X_i$. We define the \emph{tracial ultraproduct of $\{(\M_i,Y_i)\}_{i\in I}$} as 
    \begin{equation}
        \label{eq:tracialultraproduct}
        \prod_{i\in I}^\omega (\M_i,Y_i):= \prod_{i\in I}\M_i/\{(a_i)_{i\in I} \mid \lim_{i\to \omega}\|a_i\|_{2,Y_i} = 0\}.
    \end{equation}
    Here $\|a_i\|_{2,Y_i} = \sup_{y\in Y_i}\|a_i\|_{2,\tau_y}$ or equivalently it is the uniform $2$-norm in the restriction $\M_i\lvert_{Y_i}$.
\end{definition}
By combining \cite[Proposition 3.9]{2-coloured} with \cite[Proposition 5.6]{2024traciallycompletecalgebras}\footnote{The results in \cite{2-coloured,2024traciallycompletecalgebras} are only stated when the directed set $I$ equals the natural numbers. However their proofs generalize immediately to arbitrary directed sets.} and \cite[Proposition 2.9]{EvingtonPennig} the tracial ultraproduct $\prod_i^\omega(\M_i,Y_i)$ as above is canonically a $\mathrm{W}^*$-bundle over the ultrasum $\sum_\omega Y_i$\footnote{Here the ultrasum $\sum_\omega Y_i$ is the Gelfand spectrum of the norm-ultraproduct $\prod_\omega C(Y_i)$.}.

In particular, when the $\{\M_i\}_{i\in I}$ are $\mathrm{W}^*$-bundles over points $\{x_i\}$, i.e. tracial von Neumann algebras, then the ultraproduct $\prod_i^\omega (\M_i,x_i)$ is again a $\mathrm{W}^*$-bundle over a point, that is again a tracial von Neumann algebra. Furthermore, from the definition \eqref{eq:tracialultraproduct} it is clear that in this case $\prod_i^\omega (\M_i, x_i)$ agrees with $\prod_i^\omega (\M_i,\tau_{x_i})$ the usual ultraproduct of tracial von Neumann algebras.  

We will mostly be considering tracial ultraproducts where the $\mathrm{W}^*$-bundles are all equal to some fixed bundle $\M$ over a fixed space $X$, and the closed subsets $\{Y_i\}_i$ are either all equal to $X$ or all singletons. In the former case one obtains an obvious diagonal inclusion $\M\subseteq (\M,X)^\omega$. 
In the latter case one also obtains a \emph{diagonal $*$-homomorphism} $\iota^\omega :\M\to \prod^\omega(\M,x_i)$ by composing the diagonal embedding $\M\to \prod_{i\in I} \M$ with the quotient map. In this situation we also have the following further relation with ultraproducts of tracial von Neumann algebras.
\begin{proposition}\label{prop:comparisonofultraprods}
    Let $\M$ be a $\mathrm{W}^*$-bundle over $X,$ $(x_i)_{i\in I}$ a net in $X$ and $\omega$ a cofinal ultrafilter on $I$. Then the map
    \begin{align*}
        \Phi : \prod_{i\in I}^\omega (\M,x_i) &\to \prod_{i\in I}^\omega (\pi_{\tau_{x_i}}(\M)'',\tau_{x_i}):\\
        (a_i)^\omega&\mapsto (\pi_{\tau_{x_i}}(a_i))^\omega,
    \end{align*}
    defines a $*$-isomorphism making the following diagram commutative.
    \begin{equation}\label{eq:diagcomparison}
        \begin{tikzcd}
	&& {\prod_{i\in I}^\omega(\M,x_i)} \\
	\M && {\prod_{i\in I}^\omega(\pi_{\tau_{x_i}}(\M)'',{\tau_{x_i}})} \\
	&& {}
	\arrow["\Phi", "\cong"', from=1-3, to=2-3]
	\arrow["{\iota^\omega}", from=2-1, to=1-3]
	\arrow["{(\pi_{\tau_{x_i}})^\omega}"', from=2-1, to=2-3]
\end{tikzcd}
    \end{equation}
\end{proposition}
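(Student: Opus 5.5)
We need three things: $\Phi$ is well defined, it is an isometric $*$-homomorphism (hence injective), and it is surjective; commutativity of \eqref{eq:diagcomparison} will then follow from the definitions.

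\emph{Well-definedness and isometry.} The key identity is
\[
\|\pi_{\tau_x}(a)\|_{2,\tau_x} = \tau_x(a^*a)^{1/2} = \|a\|_{2,\tau_x}, \qquad a\in\M,
\]
which holds because $\tau_x$ on $\pi_{\tau_x}(\M)''$ is the vector state at the cyclic vector $\hat 1$, so $\tau_x\circ\pi_{\tau_x}=\tau_x$. We also have $\|\pi_{\tau_{x_i}}(a_i)\|\le\|a_i\|$. Hence a norm-bounded family $(a_i)_i\in\prod_i\M$ is sent to a norm-bounded family, and the sequence of $2$-norms is preserved coordinatewise. Since $\|a_i\|_{2,\{x_i\}}=\|a_i\|_{2,\tau_{x_i}}$, the kernel defining $\prod^\omega(\M,x_i)$ is mapped into the kernel defining the tracial von Neumann ultraproduct. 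So the coordinatewise map $\prod_i\M\to\prod_i\pi_{\tau_{x_i}}(\M)''$, which is a $*$-homomorphism, descends to a $*$-homomorphism $\Phi$. The identity above also gives $\|\Phi(a)\|_{2}=\lim_{i\to\omega}\|a_i\|_{2,\tau_{x_i}}$, so $\Phi(a)=0$ forces $a=0$ in $\prod^\omega(\M,x_i)$, and $\Phi$ is injective.

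\emph{Surjectivity.} Take a representative $(b_i)_i$ with $\sup_i\|b_i\|<\infty$ and $b_i\in\pi_{\tau_{x_i}}(\M)''$. By \cite[Theorem 11]{OzawaWbundle} (or \cite[Theorem 3.2.9]{Evington_thesis}), $\pi_{\tau_x}$ maps $\M$ onto $\pi_{\tau_x}(\M)''$. Lifting is possible, but we need the lifts to be uniformly norm bounded. I would use the fact that surjectivity comes with a norm-controlled lift: the isomorphism \eqref{eq:abstractfibre} $\M/I_x\cong\pi_{\tau_x}(\M)''$ is a $*$-isomorphism of $\mathrm{C}^*$-algebras, hence isometric. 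Therefore each $b_i$ has a preimage $a_i$ with $\|a_i\|\le\|b_i\|+1$; indeed, by the standard fact that quotient maps of $\mathrm{C}^*$-algebras admit lifts of equal norm, we may even take $\|a_i\|=\|b_i\|$. Then $(a_i)_i\in\prod_i\M$ is bounded and $\Phi((a_i)^\omega)=(b_i)^\omega$. This norm control is the only subtle point, and the isometric isomorphism \eqref{eq:abstractfibre} disposes of it.

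\emph{Commutativity.} For $a\in\M$ we have $\Phi(\iota^\omega(a))=\Phi((a)_i^\omega)=(\pi_{\tau_{x_i}}(a))^\omega=(\pi_{\tau_{x_i}})^\omega(a)$, which is exactly \eqref{eq:diagcomparison}.
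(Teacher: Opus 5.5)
Your proof is correct and follows essentially the same route as the paper. Both arguments use the identity $\|\pi_{\tau_{x_i}}(a_i)\|_{2,\tau_{x_i}}=\|a_i\|_{2,\tau_{x_i}}$ to identify the kernel of the coordinatewise map, together with surjectivity of each $\pi_{\tau_{x_i}}$. The one difference is that you explicitly secure uniformly norm-bounded lifts via the isometric isomorphism $\M/I_x\cong\pi_{\tau_x}(\M)''$, a point the paper leaves implicit when it asserts that $\tilde\Phi$ is surjective.
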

\begin{proof}
   We show that $\Phi$ is a $*$-isomorphism by showing that the kernel of the following surjective $*$-homomorphism
   \begin{equation}
       \tilde{\Phi}: \prod_{i\in I} \M \to \prod_{i\in I}^\omega(\pi_{\tau_{x_i}}(\M)'',{\tau_{x_i}}): (a_i)_i\mapsto (\pi_{\tau_{x_i}}(a_i))_i,
   \end{equation}
   is exactly $\{(a_i)_i\in \prod_{i\in I} \M \mid\lim_{i\to\omega}\|a_i\|_{2,\tau_{x_i}}=0\}$. However, this follows directly from the fact that for each $i\in I$ we have $\|a_i\|_{2,\tau_{x_i}}= \|\pi_{\tau_{x_i}}(a_i)\|_{2,\tau_{x_i}}$.

   Commutativity of the diagram \eqref{eq:diagcomparison} is now trivial.
\end{proof}

\subsection{Spectral gap for inclusions of von Neumann algebras}
Recall that $\mathrm{II}_1$ factor $(M,\tau)$ is called \emph{full} or \emph{non-Gamma} if every norm-bounded net $(a_i)_{i\in I}$ in $M$ that is central, i.e. such that $\lim_i\|[b,a_i]\|_{2,\tau} = 0$ for every $b\in M$, is also asymptotically trivial that is $\lim_i\|a_i-\tau(a_i)1_M\|_{2,\tau} = 0$. Due to a very remarkable result of Connes this is equivalent to the conjugation action $\U(M)\curvearrowright L^2(M,\tau)$ having a spectral gap property \cite{Connesclass} (see also \cite{marrackchigap} for the non-separable case).

In \cite{PopaSpgap}, Popa generalizes the notions of  non-Gamma factors and factors with the above mentioned spectral gap property to the situation of a von Neumann subalgebra $N$ in some $\mathrm{II}_1$ factor $(M,\tau)$. Popa calls the two notions $N$ has $w$-spectral gap in $M$ and $N$ has spectral gap in $M$ respectively. However, unlike the case for a single factor, the two notions need not agree, see for example the appendix of \cite{IoanaVaes}.

In this paper we only make use of the $w$-spectral gap property for inclusions of von Neumann algebras. The difference between the stronger (actual) spectral gap property is quite subtle, the only alteration in the definition below is that $(M)_1$ would have to be replaced with $M$. However, since we will only utilize the $w$-spectral gap property, we refer the reader to Popa's article for details concerning the stronger version.
\begin{definition}[{\cite[Remark 2.2]{PopaSpgap}}]
    \label{def:spgapinclusions}
    Let $(M,\tau)$ be a $\mathrm{II}_1$ factor and $N\subseteq M$ a von Neumann subalgebra. We say \emph{$N$ has $w$-spectral gap in $M$} if: $\forall\varepsilon>0,\exists F\subseteq \U(N)$ finite and $\delta>0$ such that if $a\in (M)_1$ satisfies $\max_{u\in F}\|[u,a]\|_{2,\tau}<\delta$, then $\|a- E_{N'\cap M}(a)\|_{2,\tau}<\varepsilon$. Here $E_{N'\cap M}: M\to N'\cap M$ is the unique trace-preserving conditional expectation onto $N'\cap M$.
\end{definition}
\begin{remark}
    Notice that a $\mathrm{II}_1$ factor $(M,\tau)$ is full if and only if $M$ has $w$-spectral gap inside itself.
\end{remark}
The following proposition gives a convenient algebraic characterization of $w$-spectral gap for inclusions. The argument is routine, however for convenience of the reader a proof is included.
\begin{proposition}    \label{prop:wspecgapincl} Let $(M,\tau)$ be a $\mathrm{II}_1$ factor and $N$ a von Neumann subalgebra of $M$. Then $N$ has $w$-spectral gap in $M$ if and only if $N'\cap M^\omega = (N'\cap M)^\omega$ for every directed set $I\neq \emptyset$ and cofinal ultrafilter $\omega$ on $I$.
\end{proposition}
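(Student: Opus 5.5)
We prove the two implications separately. Throughout, $M^\omega$ denotes the tracial ultraproduct $\prod_{i\in I}^\omega(M,\tau)$. We identify $(N'\cap M)^\omega$ with the subalgebra of $M^\omega$ consisting of elements representable by bounded nets in $N'\cap M$. The inclusion $(N'\cap M)^\omega\subseteq N'\cap M^\omega$ always holds, so only the reverse inclusion needs proof.

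\emph{Forward direction.} Assume $N$ has $w$-spectral gap in $M$, and let $x=(a_i)^\omega\in N'\cap M^\omega$. After rescaling we may assume $\|x\|\le 1$. We may then pick a representative with $a_i\in (M)_1$ for all $i$, using the standard fact that a contraction in $M^\omega$ lifts to a net of contractions (for instance via functional calculus on the real and imaginary parts, or polar decomposition).

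Now fix $\varepsilon>0$, and take $F\subseteq\U(N)$ finite and $\delta>0$ as in \Cref{def:spgapinclusions}. Since $x$ commutes with $N$, we have $\lim_{i\to\omega}\|[u,a_i]\|_{2,\tau}=0$ for each $u\in F$. As $F$ is finite, the set of $i$ with $\max_{u\in F}\|[u,a_i]\|_{2,\tau}<\delta$ belongs to $\omega$. For those $i$ we get $\|a_i-E_{N'\cap M}(a_i)\|_{2,\tau}<\varepsilon$. Consequently
\[
\bigl\|x-(E_{N'\cap M}(a_i))^\omega\bigr\|_{2}\le\varepsilon .
\]
Hence $x$ lies in the $\|\cdot\|_2$-closure of the unit ball of $(N'\cap M)^\omega$. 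This unit ball is $\|\cdot\|_2$-closed in $M^\omega$, since an ultraproduct of von Neumann algebras is a von Neumann subalgebra of $M^\omega$. Therefore $x\in(N'\cap M)^\omega$.

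\emph{Reverse direction.} We argue by contraposition and construct the index set directly from a failure of spectral gap. Suppose there is $\varepsilon>0$ such that for every finite $F\subseteq\U(N)$ and every $\delta>0$ some $a_{F,\delta}\in(M)_1$ satisfies
\[
\max_{u\in F}\|[u,a_{F,\delta}]\|_{2,\tau}<\delta
\quad\text{but}\quad
\|a_{F,\delta}-E_{N'\cap M}(a_{F,\delta})\|_{2,\tau}\ge\varepsilon .
\]
Let $I$ be the set of pairs $(F,\delta)$, directed by $(F,\delta)\le(F',\delta')$ if $F\subseteq F'$ and $\delta'\le\delta$. Choose any cofinal ultrafilter $\omega$ on $I$ and set $x=(a_{F,\delta})^\omega$.

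First, $x$ commutes with every $u\in\U(N)$, by cofinality of $\omega$. Since $N$ is spanned by its unitaries, $x\in N'\cap M^\omega$.

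It remains to show $x\notin(N'\cap M)^\omega$, and this is the main point. We use that $E_{(N'\cap M)^\omega}$ is the ultraproduct of the expectations, $E_{(N'\cap M)^\omega}\bigl((b_i)^\omega\bigr)=(E_{N'\cap M}(b_i))^\omega$. To verify this, check that the right-hand side is a trace-preserving conditional expectation onto $(N'\cap M)^\omega$, which is well defined because $E_{N'\cap M}$ is $\|\cdot\|_2$-contractive; uniqueness of such an expectation then identifies the two. With this formula,
\[
\bigl\|x-E_{(N'\cap M)^\omega}(x)\bigr\|_2=\lim_{i\to\omega}\|a_i-E_{N'\cap M}(a_i)\|_{2,\tau}\ge\varepsilon .
\]
So $x\notin (N'\cap M)^\omega$, and $N'\cap M^\omega\ne(N'\cap M)^\omega$ for this particular $I$ and $\omega$. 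This proves the reverse direction.

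The main obstacles are therefore bookkeeping rather than substance. One is the identification of the expectation onto $(N'\cap M)^\omega$ just described. The other is the contraction lifting used in the forward direction; there one could alternatively apply the spectral gap estimate to the rescaled elements $a_i/\sup_j\|a_j\|$ directly, avoiding the lifting altogether.
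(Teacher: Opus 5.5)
Your proof is correct and follows essentially the same route as the paper. The implication from $w$-spectral gap to $N'\cap M^\omega=(N'\cap M)^\omega$ uses the same ultrafilter argument, and the converse uses the same net $(a_{F,\delta})$ indexed by pairs $(F,\delta)$ with a cofinal ultrafilter. The only addition is that you justify $x\notin(N'\cap M)^\omega$ via the identification $E_{(N'\cap M)^\omega}\bigl((b_i)^\omega\bigr)=(E_{N'\cap M}(b_i))^\omega$, a step the paper leaves implicit.
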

\begin{proof}
    Suppose that $N$ does not have $w$-spectral gap in $M$. Then $\exists\varepsilon>0$ such that for every finite $F\subseteq\U(N)$ and $\delta>0$ there exists an $a_{F,\delta}\in (M)_1$ satisfying $\max_{u\in F}\|[u,a_{F,\delta}]\|_{2,\tau}<\delta$ and $\|a_{F,\delta}-E_{N'\cap M}(a_{F,\delta})\|_{2,\tau}>\varepsilon.$
    Define the directed set
    \begin{equation}
        I = \{(F,\delta) \in \mathcal{U}(N)\times \R_{>0}\mid F \text{ is finite}\},
    \end{equation}
    ordered by
    \begin{equation}\label{eq:ordering}
        (F,\delta)\leq (F',\delta')\iff F\subseteq F' \text{ and } \delta\geq\delta'.
    \end{equation}
    Let $\omega$ be some cofinal ultrafilter on $I$. Since $\|a_{F,\delta}\|\leq 1, \forall (F,\delta)\in I,$ we have $(a_{F,\delta})_{F,\delta}\in M^\omega$. Since $\max_{u\in F}\|[u,a_{F,\delta}]\|_{2,\tau}<\delta,\forall (F,\delta)\in I$ and $\omega$ is cofinal, we have $(a_{F,\delta})_{F,\delta}\in N'\cap M^\omega$. However, $(a_{F,\delta})_{F,\delta}\notin (N'\cap M)^\omega$ because $\|a_{F,\delta}-E_{N'\cap M}(a_{F,\delta})\|_{2,\tau}>\varepsilon, \forall (F,\delta)\in I$.

    Conversely suppose $N$ does have $w$-spectral gap inside $M$ and let $I\neq \emptyset$ be a directed set and $\omega$ a cofinal ultrafilter on $I$. Suppose $(a
    _i)_{i\in I}\in N'\cap M^\omega$, we have to show that $(a_i)_{i\in I}\in (N'\cap M)^\omega$ or equivalently
    \begin{equation}
        \lim_{i\to\omega}\|a_i-E_{N'\cap M}(a_i)\|_{2,\tau}=0.
    \end{equation}
    By possibly rescaling we may assume that each $\|a_i\|\leq 1$.
    Now let $\varepsilon>0$, then by definition there is a finite $F \subseteq\mathcal{U}(N)$ and $\delta>0$ such that if $a\in (M)_1$ satisfies $\max_{u\in F}\|[u,a]\|_{2,\tau}<\delta$, then also $\|a-E_{N'\cap M}(a)\|_{2,\tau}<\varepsilon$. Since $(a_i)\in N'\cap M^\omega$, we have $\lim_{i\to\omega}\|[b,a_i]\|_{2,\tau} = 0$ for every $b\in N$. Thus since $F\subseteq N$ is finite, one sees that
    \begin{equation}
        J =\{i\in I\mid \max_{u\in F}\|[u,a_i]\|_{2,\tau}<\delta\},
    \end{equation}
    is an element of the ultrafilter $\omega$. By the defining property of $F$ and $\delta$, and since every $ \|a_i \|\leq 1$, each $i\in J$ also satisfies $\|a_i-E_{N'\cap M}(a_i)\|_{2,\tau}<\varepsilon$. Hence also
    \begin{equation}
        \{i\in I\mid \|a_i-E_{N'\cap M}(a_i)\|_{2,\tau}<\varepsilon\}
    \end{equation}
    is an element of the ultrafilter $\omega$. This proves $\lim_{i\to \omega}\|a_i-E_{N'\cap M}(a_i)\|_{2,\tau} =0$, since $\varepsilon>0$ was arbitrary. 
\end{proof}
When $M$ has separable predual (equivalently $M$ is $\|\cdot\|_{2,\tau}$-separable) then an even simpler algebraic characterization can be given using a single free ultrafilter on $\mathbb{N}$.
\begin{proposition}[{\cite[Remark 2.2]{PopaSpgap}}]
    \label{prop:sepultrafilber}
    Let $(M,\tau)$ be a $\mathrm{II}_1$ factor with separable predual and $N$ a von Neumann subalgebra of $M$. Then $N$ has $w$-spectral gap inside $M$ if and only if $N'\cap M^\omega = (N'\cap M)^\omega$ for some (or any) free ultrafilter on $\mathbb{N}$.
\end{proposition}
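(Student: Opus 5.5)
The argument runs in two directions. The forward direction, from $w$-spectral gap to the ultrapower equality for a free ultrafilter $\omega$ on $\mathbb{N}$, is a special case of \Cref{prop:wspecgapincl}: $\mathbb{N}$ is a directed set and every free ultrafilter on $\mathbb{N}$ is cofinal. This gives the equality for every free ultrafilter at once, so the work lies in the converse. There I assume $N'\cap M^\omega = (N'\cap M)^\omega$ for a single free ultrafilter $\omega$ on $\mathbb{N}$ and argue by contradiction, replacing the large directed set of \Cref{prop:wspecgapincl} with a sequence.

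Suppose $N$ does not have $w$-spectral gap in $M$, and fix the witnessing $\varepsilon>0$. Since $M$ has separable predual, $N$ is $\|\cdot\|_{2,\tau}$-separable. Hence $\U(N)$ is $\|\cdot\|_{2,\tau}$-separable, and I fix a $\|\cdot\|_{2,\tau}$-dense sequence $(u_k)_{k\ge1}$ in $\U(N)$. For each $n$, apply the failure of $w$-spectral gap with $F_n=\{u_1,\dots,u_n\}$ and $\delta=1/n$. This produces $a_n\in(M)_1$ with
\begin{equation*}
\max_{k\le n}\|[u_k,a_n]\|_{2,\tau}<1/n
\quad\text{and}\quad
\|a_n-E_{N'\cap M}(a_n)\|_{2,\tau}\ge\varepsilon .
\end{equation*}

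Next I check that $a=(a_n)^\omega$ lies in $N'\cap M^\omega$. Every element of $N$ is a linear combination of four unitaries of $N$, so it suffices to show $\lim_{n\to\omega}\|[u,a_n]\|_{2,\tau}=0$ for all $u\in\U(N)$. Given such a $u$ and $\eta>0$, choose $k$ with $\|u-u_k\|_{2,\tau}<\eta$. Because $\|a_n\|\le1$,
\begin{equation*}
\|[u,a_n]\|_{2,\tau}\le 2\|u-u_k\|_{2,\tau}+\|[u_k,a_n]\|_{2,\tau}<2\eta+1/n
\end{equation*}
for all $n\ge k$. Since $\omega$ is free, the limsup along $\omega$ is at most $2\eta$, and $\eta$ was arbitrary. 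Here freeness of $\omega$ plays exactly the role that cofinality played in \Cref{prop:wspecgapincl}.

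Finally I derive the contradiction. By hypothesis $a\in(N'\cap M)^\omega$, so $a=(b_n)^\omega$ with $b_n\in N'\cap M$, which gives $\lim_\omega\|a_n-b_n\|_{2,\tau}=0$. The expectation $E_{N'\cap M}$ is the orthogonal projection onto $L^2(N'\cap M)$, so $E_{N'\cap M}(a_n)$ is the best $\|\cdot\|_{2,\tau}$-approximation of $a_n$ in $N'\cap M$. Therefore
\begin{equation*}
\|a_n-E_{N'\cap M}(a_n)\|_{2,\tau}\le\|a_n-b_n\|_{2,\tau}\to_\omega 0,
\end{equation*}
which contradicts the lower bound $\varepsilon$.

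The only delicate step is the reduction from arbitrary finite subsets $F\subseteq\U(N)$ to the initial segments $F_n$ of the dense sequence. This is where separability enters, and the uniform bound $\|a_n\|\le1$ is what makes the approximation by $u_k$ controlled in the $2$-norm. One definitional point also needs a check: membership in $(N'\cap M)^\omega$ should be read as being represented by a bounded sequence from $N'\cap M$. This is the standard reading, and $E_{N'\cap M}$ being contractive makes it harmless.
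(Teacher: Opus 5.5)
Your proof is correct. The paper quotes this statement from Popa without proof, but your argument is the one the paper itself uses for the separable clause of \Cref{prop:trivcentrseqalgs.}: the forward direction follows from \Cref{prop:wspecgapincl}, since free ultrafilters on $\mathbb{N}$ are cofinal, and the converse restricts the $(F,\delta)$-net to $F_n=\{u_1,\dots,u_n\}$ and $\delta=1/n$ for a $\|\cdot\|_{2,\tau}$-dense sequence $(u_k)$ in $\U(N)$.
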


\section{A notion of spectral gap for $\mathrm{W^*}$-bundles}\label{sec:spectralgap}
 Following the naming scheme set out by Popa, we define a notion of $w$-spectral gap for factorial $\mathrm{W}^*$-bundles.
 \begin{definition}
    \label{def:uniwspectralgap} A factorial $\mathrm{W}^*$-bundle $\M$ over $X$ has \emph{uniform $w$-spectral gap} if: $\forall\varepsilon>0,$ there exists a finite subset $F$ of $\U(\M)$ (or of $\M$) and $\delta>0$ such that if $a\in (\M)_1$ satisfies $\max_{u\in F}\|[u,a]\|_{2,X}<\delta$, then $\|a-E(a)\|_{2,X}<\varepsilon$.
\end{definition}
\begin{remark}
    In the definition it is equivalent to consider finite subsets of either $\U(\M)$ or $\M$, since every element of $\M$ is a linear combination of at most four unitaries.
\end{remark}
\begin{example}\label{ex:trivialfull}
    Whenever $(M,\tau)$ is a full factor, then the trivial bundle $C_\sigma(X,M)$ has uniform $w$-spectral gap. Indeed because $M$ is full, for every $\varepsilon>0$ there is a finite $F_0 = \{u_1,\dots,u_n\}\subseteq \U(M)$ and $\delta>0$ such that if $a\in (M)_1$ satisfies $\max_{u\in F_0}\|[u,a]\|_{2,\tau}<\delta$ then $\|a-\tau(a)1_M\|_{2,\tau}<\varepsilon$. Now defining $F = \{U_1,\dots ,U_n\}\subseteq \U(C_\sigma(X,M))$ where $U_i(x) = u_i, x\in X$ does the trick by \eqref{eq:unif2normspatial}.
\end{example}

In the same spirit as for inclusions of von Neumann algebras, using ultraproducts one can obtain an algebraic characterization of when a $\mathrm{W}^*$-bundle has uniform $w$-spectral gap. The following proposition also demonstrates why having uniform $w$-spectral gap can be viewed as a generalization of fullness of factors to the situation of $\mathrm{W}^*$-bundles. Indeed since a $\mathrm{W}^*$-bundle $\M$ over $X$ is always equipped with a central embedding $C(X)\subseteq \M$, the following results states that a factorial $\mathrm{W}^*$-bundle has uniform $w$-spectral gap if and only if its central sequence algebras (in the tracial sense) are as trivial as can be.
\begin{proposition}
    \label{prop:trivcentrseqalgs.}
    Let $\M$ be a factorial $\mathrm{W}^*$-bundle over $X$. Then $\M$ has uniform $w$-spectral gap if and only if $\M'\cap(\M,X)^\omega = C(X)_\omega$ for any directed set $I\neq \emptyset$ and cofinal ultrafilter on $\omega$ on $I$.
    Furthermore, if $\mathrm{\M}$ is $\|\cdot\|_{2,X}$-separable it suffices to consider a single free ultrafilter on $\mathbb N$.
\end{proposition}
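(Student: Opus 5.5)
The plan is to follow the proof of \Cref{prop:wspecgapincl} almost verbatim, replacing $\|\cdot\|_{2,\tau}$ by $\|\cdot\|_{2,X}$ and $E_{N'\cap M}$ by $E=E_{C(X)}$. First I fix notation. Write $(\M,X)^\omega$ for $\prod^\omega_{i\in I}(\M,X)$ and let $C(X)_\omega$ denote the image in $(\M,X)^\omega$ of bounded nets in $C(X)$. The inclusion $C(X)_\omega\subseteq \M'\cap(\M,X)^\omega$ always holds because $C(X)$ is central in $\M$. So the statement amounts to: $\M$ has uniform $w$-spectral gap if and only if every $\M$-central element of $(\M,X)^\omega$ lies in $C(X)_\omega$. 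I will use the following reformulation of the latter. A representative $(a_i)$ with $\|a_i\|\le 1$ lies in $C(X)_\omega$ if and only if $\lim_{i\to\omega}\|a_i-E(a_i)\|_{2,X}=0$.

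The "if" direction of this reformulation is immediate. For "only if", suppose $\lim_\omega\|a_i-f_i\|_{2,X}=0$ with $f_i\in C(X)$. By the Cauchy--Schwarz-type estimate $\|E(b)\|_{2,X}\le\|b\|_{2,X}$ (pointwise, $|\tau_x(b)|\le\|b\|_{2,\tau_x}$), applied to $b=a_i-f_i$, we get $\|E(a_i)-f_i\|_{2,X}\le\|a_i-f_i\|_{2,X}$. Hence $\|a_i-E(a_i)\|_{2,X}\le 2\|a_i-f_i\|_{2,X}\to 0$. Here one should take care that the $f_i$ can be chosen norm-bounded. This is fine since $E(a_i)$ is itself a bounded candidate.

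For the forward implication, assume uniform $w$-spectral gap and take $(a_i)\in\M'\cap(\M,X)^\omega$, rescaled so that $\|a_i\|\le 1$. Given $\varepsilon$, take $F,\delta$ from \Cref{def:uniwspectralgap}. Centrality means $\lim_\omega\|[u,a_i]\|_{2,X}=0$ for each $u\in F$, so the set of $i$ with $\max_{u\in F}\|[u,a_i]\|_{2,X}<\delta$ lies in $\omega$. On that set $\|a_i-E(a_i)\|_{2,X}<\varepsilon$, and hence $(a_i)\in C(X)_\omega$.

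For the converse, suppose uniform $w$-spectral gap fails for some $\varepsilon$. I index by the directed set $I=\{(F,\delta)\}$ of finite $F\subseteq\U(\M)$ and $\delta>0$, ordered as in \eqref{eq:ordering}, choose witnesses $a_{F,\delta}\in(\M)_1$, and take a cofinal ultrafilter. Cofinality gives centrality of the net against every unitary, hence against all of $\M$ by the four-unitary remark. The lower bound $\|a_{F,\delta}-E(a_{F,\delta})\|_{2,X}\ge\varepsilon$ together with the reformulation shows $(a_{F,\delta})\notin C(X)_\omega$.

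For the separable addendum, let $\{b_n\}$ be a $\|\cdot\|_{2,X}$-dense sequence in $(\M)_1$; the unit ball suffices since unitaries lie there. The commutator estimate $\|[b,a]\|_{2,X}\le 2\|b-b'\|_{2,X}+\|[b',a]\|_{2,X}$ for $\|a\|\le1$ needs a little care: $\|(b-b')a\|_{2,X}\le\|b-b'\|_{2,X}$ uses $\|ya\|_{2,\tau_x}\le\|y\|_{2,\tau_x}\|a\|$, and for $\|a(b-b')\|_{2,X}$ one uses traciality. With this estimate, failure of the property can be witnessed by $F_n=\{b_1,\dots,b_n\}$ and $\delta=1/n$. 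This yields a sequence $(a_n)$ that is central along any free ultrafilter on $\mathbb N$ but not in $C(X)_\omega$. The forward direction for a free ultrafilter on $\mathbb N$ is covered by the general case.

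The main obstacle is the reformulation of membership in $C(X)_\omega$, in particular justifying that $E$ is $\|\cdot\|_{2,X}$-contractive. Everything else is bookkeeping identical to \Cref{prop:wspecgapincl}.
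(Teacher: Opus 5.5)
Your proposal is correct and follows the paper's proof essentially step for step. It uses the reformulation of membership in $C(X)_\omega$ as $\lim_{i\to\omega}\|a_i-E(a_i)\|_{2,X}=0$, the $(F,\delta)$-indexed net of witnesses for the converse, and a diagonal sequence built from a $\|\cdot\|_{2,X}$-dense sequence for the separable addendum.

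The differences are cosmetic. You get $\|\cdot\|_{2,X}$-contractivity of $E$ from the pointwise bound $|\tau_x(b)|\le\|b\|_{2,\tau_x}$ instead of $E(a^*)E(a)\le E(a^*a)$, and you take the dense sequence in $(\M)_1$ instead of $\U(\M)$. Also note that it is the right-multiplication bound $\|ya\|_{2,\tau_x}\le\|y\|_{2,\tau_x}\|a\|$ that needs traciality, not the left one, though both hold here.
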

\begin{proof}
    First notice that since $E : \M\to C(X)$ is a conditional expectation, it will especially satisfy $E(a^*)E(a)\leq E(a^*a)$. Using this property and the definition of the $\|\cdot\|_{2,X}$-norm one can easily see that for any directed set $I$ and cofinal ultrafilter $\omega$ on $I$, one has for every $(a_i)_i\in (\M,X)^\omega:$
    \begin{equation}
        \label{eq:centralalg}
        (a_i)_i\in C(X)_\omega\subseteq (\M,X)^\omega\iff \lim_{i\to\omega}\|a_i-E(a_i)\|_{2,X}=0.
    \end{equation}
        
    Suppose that $\M$ does not have uniform $w$-spectral gap. Then using an argument almost identical to the one in the proof of \Cref{prop:wspecgapincl}, one can find an $\varepsilon>0$ and construct a net of contractions $(a_{F,\delta})_{F,\delta}$ in $\M$, indexed over the directed set $I$ consisting of pairs $(F,\delta)$ where the $F$ are finite sets of unitaries in $\M$ and the $\delta$ strictly positive real numbers and ordered as in \eqref{eq:ordering}, satisfying 
    \begin{equation}\label{eq:netconstruct}
        \max_{u\in F}\|[u,a_{F,\delta}]\|_{2,X}<\delta \text{ and } \|a_{F,\delta}-E(a_{F,\delta})\|_{2,X}>\varepsilon,\qquad \forall (F,\delta)\in I.
    \end{equation} 
    Thus, for any cofinal ultrafilter $\omega$ on $I$, the inequalities in \eqref{eq:netconstruct} in combination with \eqref{eq:centralalg} ensure that $(a_{F,\delta})_{F,\delta}\in \M'\cap (\M,X)^\omega\setminus C(X)_\omega$.

    Conversely assume that $\M$ does have uniform $w$-spectral gap and let $I\neq \emptyset$ be a directed set and $\omega$ a cofinal ultrafilter on $I$. Let $(a_i)_i\in \M'\cap (\M,X)^\omega$ we have to show that $(a_i)_i\in C(X)_\omega \subseteq (\M, X)^\omega$. By \eqref{eq:centralalg} this is equivalent to showing that
    \begin{equation}
        \lim_{i\to \omega}\| a_i-E(a_i)\|_{2,X} = 0.
    \end{equation}
    However, this follows from a very similar argument as in the proof of \Cref{prop:wspecgapincl}, using basic properties of ultrafilters and the definition of having uniform $w$-spectral gap.

    For the \say{furthermore part}, assume that $\M$ is $\|\cdot\|_{2,X}$-separable and consider a free ultrafilter $\omega$ on $\mathbb N$. The proof is complete if we can show that $\M'\cap (\M,X)^\omega = C(X)_\omega$ implies that $\M$ has uniform $w$-spectral gap. So suppose by contradiction that $\M$ does not have uniform $w$-spectral gap. As in the beginning of this proof, we can find an $\varepsilon>0$ and a net of contractions $(a_{F,\delta})_{F,\delta}$ in $\M$ indexed over the directed set $I$ as above, satisfying \eqref{eq:netconstruct}. Now take $(u_n)_{n\geq 1}$ a $\|\cdot\|_{2,X}$-dense sequence in $\U(\M)$. Define the subnet
    \begin{equation}
        a_n := a_{F_n,\frac{1}{n}}\in (\M)_1, \qquad \text{ for } F_n =\{u_1,\dots, u_n\}.
    \end{equation}
    Then since the $\|\cdot\|_{2,X}$-norm satisfies
    \begin{equation}
        \|ab\|_{2,X}\leq \min\{\|a\|\|b\|_{2,X}, \|a\|_{2,X}\|b\|\},\qquad \forall a,b\in \M,
    \end{equation}
    $\|\cdot\|_{2,X}$-density of $(u_n)_{n\geq 1}$ in $\U(\M)$ implies $(a_n)_n\in \M'\cap (\M,X)^\omega$. However, since $\|a_n-E(a_n)\|_{2,X}>\varepsilon, \forall n\geq1$ we have $(a_n)_n\notin C(X)_\omega\subseteq (\M,X)^\omega$ by \eqref{eq:centralalg}, reaching a contradiction.
\end{proof}
\Cref{prop:closedgroups} is also analogous to the situation for $\rm{II}_1$ factors, as is its proof cf. \cite[Theorem 15.3.2]{anantharaman-2010}; which makes use of the following lemma.
\begin{lemma}
    \label{lemma:closedgroups}
    Let $\M$ be a factorial $\mathrm{W}^*$-bundle over $X$, which has uniform $w$-spectral gap. Then for every $\varepsilon>0$ there is a finite $F\subseteq \M$ and $\delta>0$ such that if $u\in \U(\M)$ satisfies $\max_{a\in F}\|[u,a]\|_{2,X}<\delta$ then
    \begin{equation}
        \inf\{\|u-g\|_{2,X}\mid g\in C(X,\T) = \U(C(X))\}<\varepsilon.
    \end{equation}
\end{lemma}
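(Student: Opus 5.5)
The plan is to apply the definition of uniform $w$-spectral gap directly to the unitary $u$. Then $E(u)\in C(X)$ is uniformly close to $u$, and we correct $E(u)$ to a unitary in $C(X)$ by pointwise normalisation.

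Let $\varepsilon>0$ and choose $\varepsilon'>0$ with $\varepsilon'+\varepsilon'^2<\varepsilon$ and $\varepsilon'<1$. By \Cref{def:uniwspectralgap} applied to $\varepsilon'$, there are a finite $F\subseteq \U(\M)\subseteq \M$ and $\delta>0$ such that every $a\in(\M)_1$ with $\max_{v\in F}\|[v,a]\|_{2,X}<\delta$ satisfies $\|a-E(a)\|_{2,X}<\varepsilon'$. We take this $F$ and $\delta$ in the lemma. Now let $u\in\U(\M)$ satisfy $\max_{a\in F}\|[u,a]\|_{2,X}<\delta$. Since $\|[u,a]\|_{2,X}=\|[a,u]\|_{2,X}$ and $u\in(\M)_1$, we get $\|u-f\|_{2,X}<\varepsilon'$, where $f:=E(u)\in C(X)$.

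The key step is to show that $|f|$ is uniformly bounded away from $0$, so that $f/|f|$ is continuous. For each $x\in X$ we have $\tau_x(u)=f(x)$ and $\tau_x(u^*u)=1$. Since $f$ is central, a direct expansion gives
\begin{equation*}
\|u-f\|_{2,\tau_x}^2=\tau_x(u^*u)-2\,\mathrm{Re}\bigl(\overline{f(x)}\,\tau_x(u)\bigr)+|f(x)|^2=1-|f(x)|^2 .
\end{equation*}
Taking the supremum over $x$ and using \eqref{eq:unif2normspatial} gives $1-|f(x)|^2<\varepsilon'^2$ for all $x$. Hence $|f|\ge(1-\varepsilon'^2)^{1/2}>0$ on $X$, and $g:=f/|f|$ lies in $C(X,\T)=\U(C(X))$.

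Finally we estimate $\|u-g\|_{2,X}\le\|u-f\|_{2,X}+\|f-g\|_{2,X}$. Here $\|f-g\|_{2,X}\le\|f-g\|_\infty=\sup_x(1-|f(x)|)\le\sup_x(1-|f(x)|^2)\le\varepsilon'^2$, using $|f|\le1$. This yields $\|u-g\|_{2,X}<\varepsilon'+\varepsilon'^2<\varepsilon$, which proves the claim. The only genuinely non-formal point is the identity $\|u-E(u)\|_{2,\tau_x}^2=1-|E(u)(x)|^2$. It is what rules out $E(u)$ vanishing somewhere, which would otherwise obstruct a continuous choice of phase. Everything else is routine.
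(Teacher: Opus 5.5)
Your proof is correct, and it takes a genuinely different route from the paper's.

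The paper argues by contradiction. It produces a net of unitaries $u_{F,\delta}$ that asymptotically commute with $\M$ but stay $\varepsilon$-far from $C(X,\T)$, and regards this net as an element of $\M'\cap(\M,X)^\omega$. It then invokes \Cref{prop:trivcentrseqalgs.} to place it in $C(X)_\omega$. Finally it uses that $C(X)_\omega$ is a norm ultrapower (since $\|h\|_{2,X}=\|h\|$ for $h\in C(X)$), in which every unitary lifts to a net of unitaries of $C(X)$.

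You instead apply the definition of uniform $w$-spectral gap directly to $u$. You replace the abstract unitary-lifting step by the identity $\|u-E(u)\|_{2,\tau_x}^2=1-|E(u)(x)|^2$, which only needs $E$ to be $C(X)$-bimodular with $E(u^*)=E(u)^*$ and $E(1)=1$. This identity forces $|E(u)|$ to be bounded away from $0$, so the phase $E(u)/|E(u)|$ is a continuous unitary. That is a concrete, non-asymptotic version of the polar-decomposition argument hidden inside lifting unitaries from a norm ultrapower.

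What your version buys:
\begin{itemize}
\item It is self-contained and avoids both the central-sequence characterisation and ultrafilters on non-sequential directed sets.
\item It is quantitative: the pair $(F,\delta)$ that the definition supplies for any $\varepsilon'<1$ with $\varepsilon'+\varepsilon'^2<\varepsilon$ already works for the lemma.
\end{itemize}

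What the paper's version buys:
\begin{itemize}
\item It is shorter, given the machinery already set up.
\item It mirrors the classical $\mathrm{II}_1$-factor proof in \cite{anantharaman-2010}.
\item It isolates the two ingredients (triviality of $\M'\cap(\M,X)^\omega$ and liftability of its unitaries) that the subsequent remark uses for the partial converse of \Cref{prop:closedgroups}.
\end{itemize}
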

\begin{proof}
     Assume by contradiction that there is an $\varepsilon>0$ such that for every $\delta>0$ and finite $F\subseteq \M$ there is a unitary $u_{F,\delta}\in \U(\M)$ satisfying $\max_{a\in F}\|[u_{F,\delta},a]\|_{2,X}<\delta$ and
    \begin{equation}\label{eq:distgreater}
        \inf_{g\in C(X,\T)}\|u_{F,\delta}-g\|_{2,X}\geq\varepsilon.
    \end{equation}
    Let now $I$ be the set consisting of pairs $(F,\delta)$ of finite subsets of $\M$ and positive numbers ordered as in \eqref{eq:ordering}. Let $\omega$ be some cofinal ultrafilter on $I$, then $(u_{F,\delta})_{F,\delta}\in \M'\cap (\M,X)^\omega$.
    Since $\M$ has uniform $w$-spectral gap, \Cref{prop:trivcentrseqalgs.} implies $\M'\cap (\M,X)^\omega = C(X)_\omega$. Hence $(u_{F,\delta})_{F,\delta}$ represents a unitary in $C(X)_\omega$. Thus, since $C(X)_\omega$ is a norm-ultrapower, we can find a net of unitaries $g_{F,\delta}\in C(X,\T)$ such that $(u_{F,\delta})_{F,\delta} = (g_{F,\delta})_{F,\delta}$ in $(\M,X)^\omega$. Equivalently $\lim_{(F,\delta)\to \omega}\|u_{F,\delta}-g_{F,\delta}\|_{2,X}=0$, this contradicts \eqref{eq:distgreater}.
\end{proof}
\begin{proposition}
    \label{prop:closedgroups}
    Let $\M$ be a $\|\cdot\|_{2,X}$-separable factorial $\mathrm{W}^*$-bundle over $X$, then $\Aut \M$ is a Polish group with respect to the point-$\|\cdot\|_{2,X}$ topology. If furthermore $\M$ has uniform $w$-spectral gap, then $\Inn \M\subseteq \Aut \M$ is closed in said topology.
\end{proposition}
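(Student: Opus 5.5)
We follow the standard argument for $\mathrm{II}_1$ factors (cf. \cite[Theorem 15.3.2]{anantharaman-2010}), with \Cref{lemma:closedgroups} replacing the usual ``almost central unitaries are close to scalars'' statement.

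\emph{Polishness.} Fix a $\|\cdot\|_{2,X}$-dense sequence $(a_n)_n$ in the unit ball $(\M)_1$. Here $\Aut \M$ means automorphisms that fix $C(X)$ and commute with $E$; these preserve $\|\cdot\|_{2,X}$ and the operator norm. Define
\[
d(\alpha,\beta)=\sum_n 2^{-n}\bigl(\|\alpha(a_n)-\beta(a_n)\|_{2,X}+\|\alpha^{-1}(a_n)-\beta^{-1}(a_n)\|_{2,X}\bigr).
\]
Since automorphisms are $\|\cdot\|_{2,X}$-isometric on bounded sets, $d$ induces the point-$\|\cdot\|_{2,X}$ topology. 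The group operations are continuous, by the usual $\varepsilon/3$ argument using the isometry property. The topology is separable: $\Aut \M$ embeds into a countable product of the separable metric spaces $(\M)_1$, and subspaces of separable metric spaces are separable.

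For completeness, let $(\alpha_k)$ be $d$-Cauchy. Then $\alpha_k(a)$ and $\alpha_k^{-1}(a)$ are $\|\cdot\|_{2,X}$-Cauchy for every $a\in(\M)_1$. Completeness of $(\M)_1$ in $\|\cdot\|_{2,X}$ yields limits $\alpha(a)$ and $\beta(a)$. One checks that $\alpha$ is a $*$-homomorphism, using joint $\|\cdot\|_{2,X}$-continuity of multiplication on bounded sets. The limit also satisfies $\alpha|_{C(X)}=\mathrm{id}$ and $E\circ\alpha=E$, the latter because $E$ is $\|\cdot\|_{2,X}$-to-sup-norm continuous. Finally $\alpha\beta=\beta\alpha=\mathrm{id}$, so $\alpha\in\Aut\M$ and $\alpha_k\to\alpha$.

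\emph{Closedness of $\Inn\M$.} Take $\mathrm{Ad}\,u_k\to\alpha$. Since $\Aut\M$ is metrizable, sequences suffice. Passing to a subsequence, we may assume $d(\mathrm{Ad}\,u_k,\mathrm{Ad}\,u_{k+1})$ is small. Then $v_k=u_k^*u_{k+1}$ satisfies
\[
\|[v_k,a]\|_{2,X}=\|u_{k+1}au_{k+1}^*-u_kau_k^*\|_{2,X}
\]
for every $a$, so $v_k$ almost commutes with any prescribed finite set.

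Choose a sequence $\varepsilon_k=2^{-k}$, together with the pairs $(F_k,\delta_k)$ that \Cref{lemma:closedgroups} provides. Then pass to a subsequence so that $\max_{a\in F_k}\|[v_k,a]\|_{2,X}<\delta_k$. This gives $g_k\in C(X,\T)$ with $\|v_k-g_k\|_{2,X}<2^{-k}$. Replace $u_{k+1}$ by $w_{k+1}=u_{k+1}\bar g_1\cdots\bar g_k$. This does not change $\mathrm{Ad}$, since the $g_j$ are central unitaries. After the replacement, $\|w_k-w_{k+1}\|_{2,X}<2^{-k}$.

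The bounded sequence $(w_k)$ is therefore $\|\cdot\|_{2,X}$-Cauchy, so it converges to some $w\in(\M)_1$. Since $w^*w=\lim w_k^*w_k=1$ in $\|\cdot\|_{2,X}$, and similarly $ww^*=1$, $w$ is a unitary. Then $\mathrm{Ad}\,w_k(a)\to waw^*$ for every $a$, so $\alpha=\mathrm{Ad}\,w\in\Inn\M$.

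\emph{Main obstacle.} The only delicate point is bookkeeping in the choice of subsequence. The finite sets $F_k$ depend on $\varepsilon_k$ only, not on the $u$'s, so they can be fixed in advance. After that, convergence of $\mathrm{Ad}\,u_k$ ensures that for each $k$ we can pick later indices whose quotient almost commutes with $F_k$. We also need to confirm that multiplying by central unitaries preserves the estimates: because $g_j$ is central, $\|u_k^*u_{k+1}-g_k\|_{2,X}=\|w_k^*w_{k+1}-1\|_{2,X}$ after the corrections, so the estimates carry over.
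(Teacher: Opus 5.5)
Your argument is correct. For the closedness of $\Inn\M$ it is essentially the paper's proof. You fix $(F_k,\delta_k)$ from \Cref{lemma:closedgroups} with $\varepsilon_k=2^{-k}$, pass to a subsequence so that consecutive quotients almost commute with $F_k$, absorb the nearby central unitaries $g_k\in C(X,\T)$, and use completeness of the unit ball. Your identity $\|[u_k^*u_{k+1},a]\|_{2,X}=\|\Ad u_{k+1}(a)-\Ad u_k(a)\|_{2,X}$ just makes explicit what the paper obtains via continuity of inversion.

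The real difference is in the Polishness part. The paper argues abstractly: factoriality gives $C(X)=Z(\M)$, a cited result then makes every automorphism a $\|\cdot\|_{2,X}$-isometry, and $\Aut\M$ is a closed subgroup of an isometry group, which is Polish by Kechris. You instead write down an explicit two-sided metric and check separability and completeness by hand. This uses exactly the completeness that is actually available, namely that of $(\M)_1$. That is also the space on which the isometry-group argument should properly be run, since $\M$ itself is not $\|\cdot\|_{2,X}$-complete.

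The price is that you made isometry free by taking $\Aut\M$ to consist of automorphisms that are the identity on $C(X)$ with $E\circ\alpha=E$. Factoriality then plays no role in your first half, whereas in the paper it is exactly what makes arbitrary automorphisms isometric. For automorphisms in the sense of the paper's notion of isomorphism ($\alpha(C(X))=C(X)$ and $E\circ\alpha=\alpha\circ E$), your proof still works with trivial changes:
\begin{itemize}
\item isometry follows from $\|\alpha(a)\|_{2,X}^2=\|\alpha(E(a^*a))\|=\|E(a^*a)\|$;
\item in the completeness step, the limit maps $C(X)$ into $C(X)$ because $\|\cdot\|_{2,X}$ restricts to the sup-norm on $C(X)$;
\item the intertwining passes to the limit by $\|\cdot\|_{2,X}$-to-norm continuity of $E$.
\end{itemize}
If $\Aut\M$ is meant to be all $*$-automorphisms, you would have to import the isometry statement as the paper does. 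The second assertion is unaffected either way, since $\Inn\M$ lies in the closed subgroup of automorphisms fixing $C(X)$ pointwise.
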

\begin{proof}
    Since $\M$ is factorial we really have $C(X) = Z(\M)$. Consequently by \cite[Proposition 3.10]{2024traciallycompletecalgebras}, $\Aut \M$ is a subgroup of $\Iso (\M, \|\cdot\|_{2,X})$ the isometry group of $(\M,\|\cdot\|_{2,X})$. Because $\M$ is $\|\cdot\|_{2,X}$-separable, the latter is a Polish group in the point-$\|\cdot\|_{2,X}$ topology (see \cite[Chapter I.9]{kechris}). Hence so is the former since $\Aut\M\leq \Iso(\M,\|\cdot\|_{2,X})$ is closed, due to $\|\cdot\|_{2,X}$-continuity of multiplication on bounded sets and $\|\cdot\|_{2,X}$-continuity of the adjoint.

    Moreover, since the unit ball of $\M$ is complete with respect to $\|\cdot\|_{2,X}$ and $\M$ is $\|\cdot\|_{2,X}$-separable, also $\U(\M)$ is a Polish group when equipped with the topology induced by the complete metric 
    \begin{equation}
        \label{eq:metricUM}
         d(u,v) = \|u-v\|_{2,X},\qquad u,v\in \U(\M).
    \end{equation}

    Assume now that $\M$ has uniform $w$-spectral gap, then for each $n\geq 1$, \Cref{lemma:techincallemma} supplies a $\delta_n>0$ and finite subset $F\subseteq \M$ such that if $u\in \U(\M)$ satisfies $\max_{a\in F}\|[u,a]\|_{2,X}<\delta_n$ then 
    \begin{equation}
        d(u, C(X,\T)) = \inf_{g\in C(X,\T)}\|u-g\|_{2,X}<2^{-n}.
    \end{equation}
    Let $\alpha$ be in the closure of $\Inn \M$, then there is a sequence of unitaries $v_n\in \U(\M)$ such that $\Ad v_n\to \alpha$. Since $\Aut \M$ is a Polish group, inversion of elements is continuous. Thus after possibly passing to a subsequence, we may assume that for each $n$ we have
    \begin{equation}
        \max_{a\in F_n}\|[v_{n+1}^{-1}v_n,a]\|_{2,X} =  \max\|((\Ad v_{n+1})^{-1}\circ \Ad v_n)(a)-a\|_{2,X}<\delta_n.
    \end{equation}
    Consequently $d(v_{n+1}^{-1}v_n, C(X,\T))<2^{-n}$. Since $C(X,\T)\subseteq Z(\M)$ we can thus inductively define unitaries $u_n\in \U(\M)$ such that $d(u_{n+1},u_n) = \|u_{n+1}-u_n\|_{2,X}<2^{-n}$ and $\Ad u_n = \Ad v_n$. Completeness of $\U(\M)$ then implies there is some unitary $u\in \U(\M)$ such that $u_n\to u$ w.r.t. $\|\cdot\|_{2,X}$ . Finally, this implies $\Ad u_n\to \Ad u$, hence $\alpha = \Ad u$. 
\end{proof}
\begin{remark}
    \Cref{prop:closedgroups} also has a partial converse. Indeed, if one assumes in addition to $\Inn \M\subseteq \Aut \M$ being closed that also every unitary in $\M'\cap (\M,X)^\omega$ lifts to a sequence of unitaries in $\M$. Then the proof of $(i)\implies (ii)$ of \cite[Theorem 15.3.2]{anantharaman-2010} immediately generalizes to the situation of $\mathrm{W}^*$-bundles, to show that $\M$ has uniform $w$-spectral gap.

    Notice furthermore, that whenever $\M$ has uniform $w$-spectral gap, then this lifting-of-unitaries properties is definitely satisfied. Indeed, in this case $\M'\cap (\M,X)^\omega = C(X)_\omega$, so every unitary in $\M'\cap (\M,X)^\omega$ even lifts to a sequence of unitaries in $C(X)$ (this property was crucially exploited in the proof of \Cref{lemma:closedgroups}).

\end{remark}

The following proposition gives several a priori stronger, local characterizations of the uniform $w$-spectral gap property. This will be the crucial ingredient for proving that the uniform $w$-spectral gap property is a local property.
\begin{proposition}
    \label{prop:equivchars}
    Let $\mathcal{M}$ be a factorial $\mathrm{W}^*$-bundle over $X$. Then the following are equivalent.
    \begin{enumerate}[(i)]
        \item $\M$ has uniform $w$-spectral gap.
        \item For every $\varepsilon>0,$ there exists a finite subset $F$ of $\U(\M)$ (or of $\M$) and $\delta>0$ such that if $a\in (\M)_1$ and $x\in X$ satisfy $\max_{u\in F}\|[u,a]\|_{2,\tau_x}<\delta$, then $\|a-\tau_x(a)1_\M\|_{2,\tau_x }<\varepsilon$.
        \item If $(a_i,x_i)_{i\in I}$ is a net in $(\M)_1\times X$ such that $\lim_i\|[b,a_i]\|_{2,\tau_{x_i}} = 0$ for every $b\in\M$, then $\lim_i\|a_i-\tau_{x_i}(a_i)1_\M\|_{2,\tau_{x_i}} = 0$
        \item For any net $(x_i)_{i\in I}$ in $X$ and cofinal ultrafilter $\omega$ on $I$ we have
            \begin{equation*}
                \iota^\omega(\M)'\cap \prod_{i\in I}^\omega(\M,x_i) = \C1.
            \end{equation*}
        \item For any net $(x_i)_{i\in I}$ in $X$ and cofinal ultrafilter $\omega$ on $I$ we have
        \begin{equation*}
                (\pi_{\tau_{x_i}})^\omega(\M)'\cap \prod_{i\in I}^\omega(\pi_{\tau_{x_i}}(\M)'',\tau_{x_i}) = \C1.
            \end{equation*}
    \end{enumerate}
\end{proposition}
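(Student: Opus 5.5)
The equivalences split into an elementary part and one substantial step. The elementary part is (ii)$\Leftrightarrow$(iii)$\Leftrightarrow$(iv)$\Leftrightarrow$(v) together with (ii)$\Rightarrow$(i). The substantial step is (i)$\Rightarrow$(ii).

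\emph{The routine equivalences.} For (iv)$\Leftrightarrow$(v) we use \Cref{prop:comparisonofultraprods}: $\Phi$ is a $*$-isomorphism intertwining $\iota^\omega$ and $(\pi_{\tau_{x_i}})^\omega$. Hence it maps the relative commutant of $\iota^\omega(\M)$ onto that of $(\pi_{\tau_{x_i}})^\omega(\M)$, and it maps $\C1$ to $\C1$.

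For (iii)$\Leftrightarrow$(iv), an element of $\prod^\omega(\M,x_i)$ is represented by a bounded net $(a_i)$, which after rescaling we take in $(\M)_1$. Commuting with $\iota^\omega(\M)$ means $\lim_{i\to\omega}\|[b,a_i]\|_{2,\tau_{x_i}}=0$ for all $b\in\M$. Being in $\C1$ means $\lim_{i\to\omega}\|a_i-\tau_{x_i}(a_i)1\|_{2,\tau_{x_i}}=0$; here the scalar must be $\lim_\omega\tau_{x_i}(a_i)$, because the trace on the ultraproduct is the $\omega$-limit of the $\tau_{x_i}$. So (iii)$\Rightarrow$(iv) is immediate. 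For (iv)$\Rightarrow$(iii), argue by contradiction: if the net limit fails, pass to a cofinal subset on which the error is $\geq\varepsilon$ and pick a cofinal ultrafilter containing it.

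For (ii)$\Leftrightarrow$(iii), (ii)$\Rightarrow$(iii) is clear. For (iii)$\Rightarrow$(ii), negate (ii) and build a net indexed by pairs $(F,\delta)$ ordered as in \eqref{eq:ordering}, exactly as in \Cref{prop:wspecgapincl}. This produces $(a_{F,\delta},x_{F,\delta})$ violating (iii).

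For (ii)$\Rightarrow$(i), take $F,\delta$ from (ii). If $\max_{u\in F}\|[u,a]\|_{2,X}<\delta$, then the same bound holds in each $\|\cdot\|_{2,\tau_x}$ by \eqref{eq:unif2normspatial}. Hence $\|a-\tau_x(a)1\|_{2,\tau_x}<\varepsilon$ for every $x$. Since $E(a)$ is central and $E(a)(x)=\tau_x(a)$, we have $\|a-E(a)\|_{2,\tau_x}=\|a-\tau_x(a)1\|_{2,\tau_x}$, because $\tau_x((g-g(x))c)=0$ for $g\in C(X)$ (by $\tau_x(gc)=g(x)\tau_x(c)$). Taking the supremum over $x$ gives (i), up to replacing $<$ by $\leq$, which is harmless.

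\emph{The main step, (i)$\Rightarrow$(iii) (equivalently (ii)).} Here a pointwise hypothesis at $x$ must be upgraded to a uniform one. Given $a\in(\M)_1$ and $x$ with $\max_{u\in F}\|[u,a]\|_{2,\tau_x}$ small, I would localise around $x$. By upper semicontinuity of $y\mapsto\|c\|_{2,\tau_y}^2=E(c^*c)(y)$ (indeed continuity, since $E(c^*c)\in C(X)$), the commutator bound $\max_{u\in F}\|[u,a]\|_{2,\tau_y}<\delta$ holds on an open neighbourhood $U$ of $x$. Choose $g\in C(X)$ with $0\le g\le1$, $g(x)=1$ and $\operatorname{supp}g\subseteq U$, and set $a'=ga\in(\M)_1$. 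Since $g$ is central, $[u,a']=g[u,a]$. Then $\|[u,a']\|_{2,\tau_y}=g(y)\|[u,a]\|_{2,\tau_y}$, which is $<\delta$ for $y\in U$ and $0$ outside $U$. So $\max_u\|[u,a']\|_{2,X}<\delta$, and (i) yields $\|a'-E(a')\|_{2,X}<\varepsilon$. Evaluating at $x$, where $g(x)=1$, gives $\|a-\tau_x(a)1\|_{2,\tau_x}<\varepsilon$, which is (ii).

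The only subtle point is continuity of $y\mapsto\|[u,a]\|_{2,\tau_y}$. This holds since it is the square root of $E([u,a]^*[u,a])\in C(X)$, so I expect no real obstacle. One must just choose $F,\delta$ from (i) before $a,x$, and use strict inequality to get an open $U$.
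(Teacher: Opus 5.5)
Your proposal is correct and follows essentially the same route as the paper. The key step (i)$\Rightarrow$(ii) is the same bump-function localisation $a\mapsto ga$, with $g(x)=1$ and $g$ supported where the pointwise commutator bound holds, and the remaining equivalences are the same routine net/ultrafilter arguments, with \Cref{prop:comparisonofultraprods} giving (iv)$\Leftrightarrow$(v). (Like the paper, you call (iii)$\Rightarrow$(iv) immediate; strictly one must either reindex the $\omega$-large sets by pairs $(F,\delta)$ to obtain a genuine net, or route it through (ii). Likewise, the strict bound $\max_{u}\|[u,ga]\|_{2,X}<\delta$ uses that the continuous function $y\mapsto g(y)\|[u,a]\|_{2,\tau_y}$ attains its supremum on the compact space $X$.)
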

\begin{proof}
    $(i)\implies (ii):$ Let $\varepsilon>0$ and pick $F,\delta$ as in \Cref{def:uniwspectralgap}. We claim that $F,\delta$ also suffice for $(ii)$. So suppose $a\in (\M)_1$ and $x\in X$ satisfy $\max_{u\in F}\|[u,a]\|_{2,\tau_x}< \delta$. As $y\mapsto \max_{u\in F}\|[u,a]\|_{2,\tau_y}$ is continuous, there is an open neighbourhood $U\subseteq X$ of $x$ such that $\max_{u\in F}\|[u,a]\|_{2,\tau_y}<\delta,\forall y \in U$. Let now $f\in C(X,[0,1])$ be a function, such that $f(x)=1$ and $f(y)=0, \forall y\notin U$. Then $fa\in (\M)_1$ satisfies $\max_{u \in F} \|[u,fa]\|_{2,X}<\delta$ and so $\|fa-E(fa)\|_{2,X}< \varepsilon$. In particular, since $f(x)=1$, we find $\|a-\tau_x(a)1_\M\|_{2,\tau_x}<\varepsilon$. \\
    $(ii)\implies (i):$ Obvious from \eqref{eq:unif2normspatial}.\\
    $(ii)\implies (iii):$ This is immediate.\\
    $(iii)\implies (ii):$ This follows from a, by now, routine argument. Indeed, assume by contradiction that $(ii)$ is not true. Then there is an $\varepsilon>0$ such that for any finite subset $F\subseteq\M$ and $\delta>0$ there are $a_{F,\delta}\in (\M)_1$ and $x_{F,\delta}\in X$ that satisfy $\max_{b\in F}\|[b,a_{F,\delta}]\|_{2,\tau_{x_{F,\delta}}}<\delta$ but $\|a_{F,\delta}-\tau_{x_{F,\delta}}(a_{F,\delta})1_\M \|_{2,\tau_{x_{F,\delta}}}>\varepsilon$. Then the net $(a_{F,\delta}, x_{F,\delta})$ where the $(F,\delta)$ are ordered by
    \begin{equation}
        (F,\delta)\leq (F',\delta')\iff F\subseteq F' \text{ and } \delta\geq\delta',
    \end{equation}
    clearly contradicts $(iii)$.\\
    $(iii)\implies (iv):$ This is trivial.\\
    $(iv)\implies (iii):$ By \Cref{prop:comparisonofultraprods}, $\prod^\omega (\M, x_i)$ is a finite factor with trace $\tau^\omega$ given by
    \begin{equation}
        \tau^\omega((a_i)_i) = \lim_{i\to\omega}\tau_{x_i}(a_i),\qquad \text{ for }(a_i)_i\in \prod_{i\in I}\M.
    \end{equation}
    From this, the contraposition of $(iv)\implies (iii)$ immediately follows.\\
    The equivalence $(iv)\iff(v)$ follows from \Cref{prop:comparisonofultraprods}.\\
    This proves all the equivalences. 
    
\end{proof}
\begin{corollary}\label{cor:fullfibres}
    Let $\M$ be a factorial $\mathrm{W}^*$-bundle that has uniform $w$-spectral gap. Then the fibres of $\M$ are full factors.
\end{corollary}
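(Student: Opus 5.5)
The plan is to deduce the corollary from characterization $(ii)$ in \Cref{prop:equivchars}, applied at a single point, together with the surjectivity of the GNS maps onto the fibres. Fix $x\in X$ and write $\M_x=\pi_{\tau_x}(\M)$, which is a factor by assumption. The goal is to show that $\M_x$ has $w$-spectral gap inside itself in the sense of \Cref{def:spgapinclusions}. By the remark following that definition, this is exactly fullness.

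Let $\varepsilon>0$. By \Cref{prop:equivchars} $(i)\Rightarrow(ii)$, choose a finite $F\subseteq\U(\M)$ and $\delta>0$ such that for every $a\in(\M)_1$ and every $y\in X$, $\max_{u\in F}\|[u,a]\|_{2,\tau_y}<\delta$ implies $\|a-\tau_y(a)1\|_{2,\tau_y}<\varepsilon$. Set $F_x=\pi_{\tau_x}(F)\subseteq\U(\M_x)$; this set is finite. Now take $b\in(\M_x)_1$ with $\max_{v\in F_x}\|[v,b]\|_{2,\tau_x}<\delta$.

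The key step is to lift $b$ to a contraction $a\in(\M)_1$ with $\pi_{\tau_x}(a)=b$. Surjectivity of $\pi_{\tau_x}\colon\M\to\M_x$ (\cite[Theorem 11]{OzawaWbundle}) only gives some preimage. However, $\pi_{\tau_x}$ factors through the $*$-isomorphism $\M/I_x\cong\M_x$ of \eqref{eq:abstractfibre}, and $I_x$ is a closed two-sided ideal, so $\M/I_x$ is a $\mathrm{C}^*$-quotient. Contractions lift to contractions through surjective $*$-homomorphisms of $\mathrm{C}^*$-algebras, for instance by lifting $b$ to some $a_0$ and then applying functional calculus, or by the standard fact that the open unit ball maps onto the open unit ball. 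This gives the required $a$. If one prefers strictly open balls, rescale $b$ slightly by $1-\eta$ and let $\eta\to0$ at the end.

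With the lift in hand, $\|[u,a]\|_{2,\tau_x}=\|[\pi_{\tau_x}(u),b]\|_{2,\tau_x}<\delta$ for each $u\in F$, so $(ii)$ at $y=x$ gives $\|b-\tau_x(b)1\|_{2,\tau_x}=\|a-\tau_x(a)1\|_{2,\tau_x}<\varepsilon$. Since $\M_x$ is a factor, $E_{\M_x'\cap\M_x}=\tau_x(\cdot)1$, so $\M_x$ has $w$-spectral gap in itself and is therefore full.

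The only delicate point is norm control in the lift. If $\M_x$ is finite-dimensional it is trivially full, and otherwise it is a $\mathrm{II}_1$ factor as required by the definition. Alternatively, the same conclusion follows from $(v)$ with the constant net $x_i=x$: the relative commutant of the diagonal copy of $\M_x$ in $\M_x^\omega$ is trivial, and \Cref{prop:wspecgapincl} then yields fullness.
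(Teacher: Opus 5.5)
Your argument is correct and is essentially the paper's proof: the paper applies \Cref{prop:equivchars}$(iii)$ to the constant net $(x_i)_i=(x)_i$ and uses surjectivity of $\pi_{\tau_x}$, which is the net form of your single-point use of $(ii)$. Your observation that contractions lift to contractions through the $\mathrm{C}^*$-quotient $\M\to\M/I_x\cong\M_x$ spells out a norm-control detail that the paper leaves implicit.
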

\begin{proof}
    This is immediate from surjectivity of the maps $\pi_{\tau_x}: \M\to \pi_{\tau_x}(\M)''$ and \Cref{prop:equivchars}$(iii)$, by considering constant nets $(x_i)_i = (x)_i$.
\end{proof}
\Cref{prop:equivchars} also gives rise to a second class of examples of $\mathrm{W}^*$-bundles with uniform $w$-spectral gap: tracial completions of universal group $\mathrm{C}^*$-algebras of groups with property (T). This is explained in the following example.
\begin{example}
    \label{ex:propTgroups}
    By \cite[Theorem 12.1.7]{brownozawa}) a countable discrete group $\Gamma$ has \emph{Kazhdan's property \rm{(T)}} \cite{kazhdan}, if and only if there exist a finite subset $F\subseteq \Gamma$ and $c>0$ with the following property: if $\rho : \Gamma\to B(\H)$ is a unitary representation of $\Gamma$ and $P_\rho$ is the orthogonal projection of $\H$ onto the subspace of all $\Gamma$-invariant vectors, then for every $\xi\in \H$ one has
    \begin{equation}\label{eq:strongT}
        \max_{g\in F}\|\rho(g)\xi-\xi\|_2\geq c \|\xi-P_\rho \xi\|_2.
    \end{equation}

    Fix now a countable discrete group $\Gamma$ with property (T) and a pair $(F,c)$ as above. Consider the universal group $\mathrm{C}^*$-algebra $A =C^*(\Gamma) = C^*(u_g\mid g\in \Gamma)$. Since $\Gamma$ has property $\mathrm{(T)}$, \cite[Corollary 1.6]{levit2024spectralgapcharacterlimits} implies its simplex of tracial states $T(A)$ is a Bauer simplex, that is the extreme boundary $\partial_e T(A)$ is compact (in the weak*-topology). Hence by \cite[Theorem 3.37]{2024traciallycompletecalgebras} (see also \cite{OzawaWbundle}) the \emph{tracial completion}
    \begin{equation}
        \label{eq:tracialcompletion}
        \M := \overline{A}^{T(A)}= \frac{\{(a_n)_{n\geq 1}\in \ell^\infty(A)\mid (a_n)_{n\geq 1} \text{ is $\|\cdot\|_{2,T(A)}$-Cauchy}\}}{\{(a_n)_{n\geq 1}\in \ell^\infty(A)\mid (a_n)_{n\geq 1} \text{ is $\|\cdot\|_{2,T(A)}$-null}\}},
    \end{equation}
    is canonically a $\mathrm{W}^*$-bundle over $X:= \partial_eT(A)$.\footnote{Here, the seminorm $\|\cdot\|_{2,T(A)}$ is defined as $\|a\|_{2, T(A)} = \sup_{\tau\in T(A)}\|a\|_{2,\tau} =\sup_{\tau\in \partial_eT(A)}\|a\|_{2,\tau} $, see \cite[Proposition 3.3]{2024traciallycompletecalgebras}.} Furthermore, the above construction comes equipped with a unital diagonal $*$-homomorphism $\alpha : A\to \M$ with $\|\cdot\|_{2,X}$-dense image. The $*$-homomorphism $\alpha$ also has the following property. If $x\in X = \partial_eT(A)$ is an extremal trace  of $A$ then the tracial state $\tau_x$ on $\M$ extends $x$ (that is $x = \tau_x\circ\alpha$), furthermore if $\pi_x : A\to \pi_x(A)''$ and $\pi_{\tau_x}:\M\to \pi_{\tau_x}(\M)''$ are the respective GNS-representations, then there is a unique $*$-isomorphism $\theta_x : \pi_x(A)''\to\pi_{\tau_x}(\M)''$ that makes the diagram 
    \begin{equation}
        \label{eq:identificationcompletion}
        \begin{tikzcd}
	A && {\pi_x(A)''} \\
	\\
	\M && {\pi_{\tau_x}(\M)''}
	\arrow["{\pi_x}", from=1-1, to=1-3]
	\arrow["\alpha"', from=1-1, to=3-1]
	\arrow["{\theta_x}", "\cong"', from=1-3, to=3-3]
	\arrow["{\pi_{\tau_x}}", two heads, from=3-1, to=3-3]
\end{tikzcd}
    \end{equation}
    commute, see \cite[Proposition 3.23]{2024traciallycompletecalgebras}.

    Finally, let $x\in X$ be an extremal trace on $A$, then $\pi_x(A)''$ is a finite factor generated by the $\pi_x(u_g), g\in \Gamma$. We also denote its unique trace by $\tau_x$ (the diagram \eqref{eq:identificationcompletion} justifies this notation). Consider now the unitary representation $\rho_x: \Gamma \to B(L^2(A,x))$ given by unitary conjugation, that is $(\rho_{x}(g))(\xi) = \pi_x(u_g)\pi_x^\text{op}(u_g^*)\xi$\footnote{Here the opposite representation $\pi_x^\text{op}$ is well-defined since $x$ is a tracial state.}. 
    Since the unitaries $\pi_x(u_g), g\in \Gamma$ generate the finite factor $\pi_x(A)''$, which is in standard form on $L^2(A,x)$, the space of $\rho_x$-invariant vectors is then $\C1$. As such by \eqref{eq:strongT}, for every $a\in \pi_x(A)''$ we have
    \begin{align}
        \label{eq:spgapdensesubalg1}
        \max_{g\in F}\|[\pi_x(u_g),a]\|_{2,\tau_x}& = \max_{g\in F}\|\rho_x(g)\hat{a}-\hat{a}\|_{L^2(A,x)}
        \\&\geq c\|\hat{a}- P_{\rho_x}\hat a\|_{L^2(A,x)}
        \\ &=c\|a-\tau_x(a)1\|_{2,\tau_x}.
    \end{align}
    Combining the identifications in \eqref{eq:identificationcompletion} with the above inequality shows that $\M$ definitely satisfies \cref{prop:equivchars} $(ii)$. Hence $\M = \overline{C^*(\Gamma)}^{T(C^*(\Gamma))}$ has uniform $w$-spectral gap.
\end{example}

In the remainder of this section we prove that having uniform $w$-spectral gap is really a local property of $\mathrm{W}^*$-bundles. This will now follow essentially from the fact that we only consider bundles over compact spaces.
\begin{proposition}
    \label{prop:localityofspgap}
    Let $\M$ be a factorial $\mathrm{W}^*$-bundle. Then $\M$ has uniform $w$-spectral gap if and only if for every $x\in X$ there is a closed neighbourhood $Y\subseteq X$ of $x$ such that $\M\lvert_Y$ has uniform $w$-spectral gap.
\end{proposition}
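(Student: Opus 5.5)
We use the pointwise characterization \Cref{prop:equivchars}$(ii)$, together with the fact that restriction to $Y$ changes neither the fibres nor the traces $\tau_y$ for $y\in Y$ (diagram \eqref{eq:tracediagram}). We also use that the quotient map $q_Y:\M\to\M\lvert_Y$ is surjective and maps $(\M)_1$ onto $(\M\lvert_Y)_1$.

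For the forward direction, take $Y=X$. More generally, let $Y$ be any closed neighbourhood. Given $\varepsilon>0$, choose $F\subseteq\U(\M)$ and $\delta$ as in \Cref{prop:equivchars}$(ii)$ for $\M$, and set $q_Y(F)\subseteq\U(\M\lvert_Y)$. Let $b\in(\M\lvert_Y)_1$ and $y\in Y$ satisfy $\max_{u\in F}\|[q_Y(u),b]\|_{2,\tau_y}<\delta$. Lift $b$ to some $a\in(\M)_1$ with $q_Y(a)=b$; contractive lifts exist because $q_Y$ is a surjective $*$-homomorphism of $\mathrm{C}^*$-algebras. Since $\tau_y\circ q_Y=\tau_y$, all the $2$-seminorms agree, so $(ii)$ for $\M$ gives $\|b-\tau_y(b)1\|_{2,\tau_y}<\varepsilon$. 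Thus $\M\lvert_Y$ satisfies $(ii)$, and it is factorial because its fibres coincide with those of $\M$.

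For the converse, fix $\varepsilon>0$. For each $x\in X$ choose a closed neighbourhood $Y_x$ on which $\M\lvert_{Y_x}$ has uniform $w$-spectral gap. By \Cref{prop:equivchars}$(ii)$ applied to $\M\lvert_{Y_x}$ we obtain a finite $F_x\subseteq\M\lvert_{Y_x}$ and $\delta_x>0$. Lift $F_x$ elementwise to a finite set $\tilde F_x\subseteq\M$; finite subsets of $\M$ are allowed by the remark after \Cref{def:uniwspectralgap}. By compactness, the interiors of finitely many $Y_{x_1},\dots,Y_{x_n}$ cover $X$. Set $F=\bigcup_k\tilde F_{x_k}$ and $\delta=\min_k\delta_{x_k}$. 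Now let $a\in(\M)_1$ and $x\in X$ satisfy $\max_{u\in F}\|[u,a]\|_{2,\tau_x}<\delta$. Pick $k$ with $x\in Y_{x_k}$. Then $q_{Y_{x_k}}(a)\in(\M\lvert_{Y_{x_k}})_1$ has small commutators with $F_{x_k}$ in $\|\cdot\|_{2,\tau_x}$, again because $\tau_x\circ q=\tau_x$. Hence $\|a-\tau_x(a)1\|_{2,\tau_x}<\varepsilon$, which is $(ii)$ for $\M$, and \Cref{prop:equivchars} gives uniform $w$-spectral gap.

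The only delicate point is the bookkeeping that the traces $\tau_y$ on $\M$ and on $\M\lvert_Y$ agree under $q_Y$, and that the unit balls correspond. Both follow from \eqref{eq:tracediagram} and standard $\mathrm{C}^*$-quotient facts. Everything else is a compactness argument enabled by the pointwise formulation $(ii)$. That formulation is also why we work with $(ii)$ rather than with the uniform norm $\|\cdot\|_{2,X}$ directly, since the uniform norms on overlapping neighbourhoods would not patch together.
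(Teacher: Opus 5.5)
Your proof is correct and follows essentially the same route as the paper: both directions go through the pointwise characterization in \Cref{prop:equivchars}$(ii)$, using that $\tau_y\circ q_Y=\tau_y$ for the quotient maps $q_Y:\M\to\M\lvert_Y$. The converse is the same compactness argument, taking the union of lifted finite sets and the minimum of the $\delta$'s. Your remarks that $Y=X$ already suffices for the forward direction (since $I_X=0$ by faithfulness of $E$) and that contractive lifts are needed to stay inside $(\M)_1$ are small but welcome clarifications of points the paper leaves implicit.
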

\begin{proof}
    The \say{only if part} follows immediately from surjectivity of the maps $\M\to \M\lvert_Y$, \Cref{prop:equivchars}$(ii)$ and the diagram \eqref{eq:tracediagram}. So it remains to proof the \say{if part}. By compactness we can cover $X$ by finitely many closed sets $Y_1,\dots, Y_k$ such that $\M\lvert_{Y_i}$ has uniform $w$-spectral gap for each $i=1,\dots,k$. Let $\varepsilon>0$. By \Cref{prop:equivchars} $(ii)$ we can find for each $i=1,\dots,k$ a finite subset $F_i = \left\{c_i^{(1)},\dots, c_i^{(n_i)}\right\}\subseteq \M\lvert_{Y_i}$ and $\delta_i>0$ such that if $a\in (\M\lvert_{Y_i})_1$ and $y\in Y_i$ satisfy $\max_{c\in F_i}\|[c,a]\|_{2,\tau_{y}}<\delta_i$ then $\|a-\tau_y(a)\|_{2,\tau_{y}}<\varepsilon$. Now let $b_i^{(j)}$ be a lift for $c_i^{(j)}$ for each $1\leq i\leq k, 1\leq j\leq n_i$. Then by the diagram \eqref{eq:tracediagram}, $F = \left\{b_{i}^{(j)}\mid 1\leq i\leq k, 1\leq j\leq n_i\right\}\subseteq \M$ and $\delta = \min\{\delta_i\mid 1\leq i\leq k\}$ do the trick for \Cref{prop:equivchars} $(ii)$.
\end{proof} 
\begin{corollary}
    \label{cor:locallytriv}
    Any locally trivial factorial $\mathrm{W}^*$-bundle $\M$ with full fibres has uniform $w$-spectral gap.
\end{corollary}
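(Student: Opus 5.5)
The plan is to combine the locality result \Cref{prop:localityofspgap} with \Cref{ex:trivialfull}. By \Cref{def:locallytrivialw*bundle}, every $x\in X$ has a closed neighbourhood $Y$ and an isomorphism of $\mathrm{W}^*$-bundles $\alpha:\M\lvert_Y\to C_\sigma(Y,M)$ for some tracial von Neumann algebra $(M,\tau)$. By \Cref{prop:localityofspgap} it then suffices to show that each such $\M\lvert_Y$ has uniform $w$-spectral gap.

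First, I identify the fibre $M$. The fibre of $C_\sigma(Y,M)$ at any $y\in Y$ is $\mathrm{eval}_y(C_\sigma(Y,M))=M$, since constant functions are in the bundle. By the canonical identification $(\M\lvert_Y)_y\cong \M_y$ from diagram \eqref{eq:tracediagram}, the isomorphism $\alpha$ induces an isomorphism of fibres $\M_y\cong M$. The induced trace on $M$ is $\mathrm{eval}_y\circ E$, which equals $\tau$ because $E_2\circ\alpha=\alpha\circ E_1$. By hypothesis $\M_y$ is a full factor, so $M$ is a full factor. (Here I use that $Y$ is nonempty and that "full fibres" means full $\mathrm{II}_1$ factors; if finite-dimensional factors are allowed, they trivially satisfy the spectral gap condition, so \Cref{ex:trivialfull} still applies.)

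Next, \Cref{ex:trivialfull} gives that $C_\sigma(Y,M)$ has uniform $w$-spectral gap. The only point needing care is that uniform $w$-spectral gap is invariant under isomorphism of $\mathrm{W}^*$-bundles. This holds because an isomorphism $\alpha$ preserves $C(Y)$ and intertwines the expectations. Hence $\|\alpha(a)\|_{2,Y}=\|\alpha(E(a^*a))\|=\|E(a^*a)\|$, using that $\alpha$ restricted to $C(Y)$ is a $*$-automorphism and therefore isometric. So $\alpha$ preserves the uniform 2-norms, commutators, unit balls, unitaries and $a-E(a)$. Pulling back the finite set $F$ via $\alpha^{-1}$ therefore transfers the property to $\M\lvert_Y$. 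Finally, $\M\lvert_Y$ is factorial since its fibres coincide with those of $\M$, so \Cref{prop:localityofspgap} applies and yields the claim.

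I do not expect a real obstacle. The only step requiring attention is the bookkeeping in the isomorphism invariance: $\alpha(C(Y))=C(Y)$ may act on $C(Y)$ by a nontrivial automorphism, which could a priori permute points of $Y$. This is harmless because the condition in \Cref{def:uniwspectralgap} uses only the sup-norm $\|\cdot\|_{2,Y}$, which is preserved by any such automorphism.
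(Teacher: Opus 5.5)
Your proposal is correct and follows the paper's own argument, which combines \Cref{ex:trivialfull} with \Cref{prop:localityofspgap}. The bookkeeping you add is what the paper leaves implicit in the word ``immediate'': that the model fibre $M$ is full, that uniform $w$-spectral gap is invariant under $\mathrm{W}^*$-bundle isomorphisms even when $\alpha$ acts nontrivially on $C(Y)$, and that $\M\lvert_Y$ is factorial. (Your norm identity is missing a square root, $\|\alpha(a)\|_{2,Y}=\|E(a^*a)\|^{1/2}$, which is harmless.)
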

\begin{proof}
    This is immediate by combining \Cref{ex:trivialfull} with \Cref{prop:localityofspgap}.
\end{proof}

\section{Uniform $w$-spectral gap for subtrivial bundles}\label{sec:construction}
In this section we give a complete characterization of when a subtrivial bundle with fibres either a fixed full $\mathrm{II}_1$ factor $M$ with separable predual, or a fixed full subfactor $N\subseteq M$ has uniform $w$-spectral gap. As a consequence we can concretely construct factorial $\mathrm{W}^*$-bundles with all fibres isomorphic to some fixed full factor that are not locally trivial.

The following lemma will be the main technical tool for our characterization.
\begin{lemma}
    \label{lemma:techincallemma}
    Let $X$ be a compact Hausdorff space and $Y\subseteq X$ a closed subset. Suppose $(M,\tau)$ is a tracial von Neumann algebra and $N\subseteq M$ is a von Neumann subalgebra. Consider the subtrivial $\mathrm{W}^*$-bundle $\M = \{f\in C_\sigma(X,M)\mid f(x)\in N, \forall x\in Y\}.$ Let $(x_i)_{i\in I}$ be a net in $X$ and $\omega$ a cofinal ultrafilter on $I$. Let $x = \lim_{i\to \omega} x_i\in X$.
    \begin{enumerate}
        \item If $x\in Y$ then the following diagram commutes.
    \begin{equation}
    \begin{tikzcd}
	\M && {\prod_{i\in I}^\omega (\M, x_i)} \\
	N && {M^\omega}
	\arrow["{\iota^\omega}", from=1-1, to=1-3]
	\arrow["{\mathrm{eval}_x}"', two heads, from=1-1, to=2-1]
	\arrow["{(\mathrm{eval}_{x_i})^\omega}", hook', from=1-3, to=2-3]
	\arrow[hook, from=2-1, to=2-3]
\end{tikzcd}
    \end{equation}
    Furthermore $(\mathrm{eval}_{x_i})^\omega$ is a $*$-isomorphism if $x_i\in X\setminus Y$ for every $i\in I$.
    \item  If $x\notin Y$ then $(\mathrm{eval}_{x_i})^\omega$ is a $*$-isomorphism and the following diagram commutes.
    \begin{equation}
        \begin{tikzcd}
	\M && {\prod_{i\in I}^\omega (\M, x_i)} \\
	M && {M^\omega}
	\arrow["{\iota^\omega}", from=1-1, to=1-3]
	\arrow["{\mathrm{eval}_x}"', two heads, from=1-1, to=2-1]
	\arrow["(\mathrm{eval}_{x_i})^\omega", "\cong"', from=1-3, to=2-3]
	\arrow[hook, from=2-1, to=2-3]
\end{tikzcd}
    \end{equation}
    \end{enumerate}
\end{lemma}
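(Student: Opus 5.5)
First I check that $(\mathrm{eval}_{x_i})^\omega$ is a well-defined injective $*$-homomorphism. For a bounded family $(a_i)_i\in\prod_i\M$ I send its class to the class of $(a_i(x_i))_i$ in $M^\omega$. Since the trace $\tau_{x_i}$ on $\M$ is $\tau\circ\mathrm{eval}_{x_i}$, we have $\|a_i\|_{2,\tau_{x_i}}=\|a_i(x_i)\|_{2,\tau}$. So the kernel of $\prod_i\M\to M^\omega$ is exactly $\{(a_i)\mid \lim_{i\to\omega}\|a_i\|_{2,\tau_{x_i}}=0\}$, and the map descends to an injective $*$-homomorphism $\prod^\omega(\M,x_i)\hookrightarrow M^\omega$. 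This is the same argument as in \Cref{prop:comparisonofultraprods}. The image always lies inside $\prod^\omega N_{x_i}$, where $N_{x_i}$ equals $N$ or $M$ according to whether $x_i\in Y$.

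Next comes commutativity of the diagrams. For $a\in\M$, the composite $(\mathrm{eval}_{x_i})^\omega\circ\iota^\omega(a)$ is the class of $(a(x_i))_i$. Since $a$ is $\|\cdot\|_{2,\tau}$-continuous and $x_i\to x$ along $\omega$, we get $\lim_{i\to\omega}\|a(x_i)-a(x)\|_{2,\tau}=0$, so this class equals the constant class of $a(x)$. This gives both diagrams; in case (1) the value $a(x)$ lies in $N$ because $x\in Y$. Surjectivity of $\mathrm{eval}_x$ onto $N$, respectively onto $M$, is \Cref{prop:subtrivialbundles} applied via \Cref{ex:subtrivial}.

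The remaining task, and the main obstacle, is surjectivity of $(\mathrm{eval}_{x_i})^\omega$ in the two stated situations: either $x\notin Y$, or every $x_i\in X\setminus Y$. Given $(b_i)_i\in M^\omega$ with $\|b_i\|\le 1$, I need $a_i\in\M$ with $\|a_i\|$ uniformly bounded and $a_i(x_i)=b_i$, at least for $\omega$-many $i$. Whenever $x_i\notin Y$, I would use Urysohn's lemma to pick $f_i\in C(X,[0,1])$ with $f_i(x_i)=1$ and $f_i|_Y=0$, and set $a_i=f_ib_i$, i.e.\ $a_i(y)=f_i(y)b_i$. This function is norm-bounded by $1$ and $\|\cdot\|_{2,\tau}$-continuous, because $\|f_i(y)b_i-f_i(y')b_i\|_{2,\tau}\le|f_i(y)-f_i(y')|$. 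It also vanishes on $Y$, so it lies in $\M$, and $a_i(x_i)=b_i$. (Urysohn requires normality, which holds since $X$ is compact Hausdorff.) If every $x_i\notin Y$ this handles all indices. If instead $x\notin Y$, then $X\setminus Y$ is an open neighbourhood of $x$, so $\{i\mid x_i\notin Y\}\in\omega$. I would use the construction above on this set and put $a_i=0$ on its complement, which is $\omega$-null and so does not affect the class. In either case the class of $(a_i)_i$ maps to that of $(b_i)_i$, so $(\mathrm{eval}_{x_i})^\omega$ is onto $M^\omega$ and hence a $*$-isomorphism.
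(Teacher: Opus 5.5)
Your proof is correct and follows the paper's argument: injectivity from $\|a\|_{2,\tau_{x_i}}=\|a(x_i)\|_{2,\tau}$, commutativity from $\|\cdot\|_{2,\tau}$-continuity along $x_i\to x$, and surjectivity by lifting on the $\omega$-large set $\{i\mid x_i\notin Y\}$. The only deviation is that you build the lifts explicitly as $f_ib_i$ via Urysohn's lemma instead of citing \Cref{prop:subtrivialbundles}. That makes your argument more self-contained and gives the norm bound $\|f_ib_i\|\le\|b_i\|$ for free, whereas the paper's citation by itself gives only surjectivity of $\mathrm{eval}_{x_i}$ and tacitly relies on norm-preserving lifts through a surjective $*$-homomorphism.
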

\begin{proof}\leavevmode
    \begin{enumerate}[wide, labelwidth=!, labelindent=0pt]
        \item That $(\mathrm{eval}_{x_i})^\omega$ is a well-defined embedding follows from the fact that for each $i\in I$ and $a\in \M$ we have $\|a\|_{2,\tau_{x_i}} = \|\mathrm{eval}_{x_i}(a)\|_{2,\tau}.$ Surjectivity of $(\mathrm{eval}_{x_i})^\omega$ for the case where $x_i\in X\setminus Y$ for each $i\in I$ is implied by \Cref{prop:subtrivialbundles} and \Cref{ex:subtrivial}. 
        What remains to be proven is commutativity of the diagram. So let $a\in \M$, we have to show that $(a(x))_i = (a(x_i))_i$ in $M^\omega$. This is equivalent to 
        $$\lim_{i\to\omega} \|a(x)-a(x_i)\|_{2,\tau} =0,$$
        which follows from the fact that $a: X\to M$ is $\|\cdot\|_{2,\tau}$-continuous and $\lim_{i\to \omega}x_i = x$.
        \item Injectivity of $(\mathrm{eval}_{x_i})^\omega$ and commutativity of the diagram follows from exactly the same argument as in (1). What remains to be proven is surjectivity of $(\mathrm{eval}_{x_i})^\omega$. Let $(a_i)_i\in M^\omega$. Since $X\setminus Y$ is open and $x=\lim_{i\to \omega}x_i\in X\setminus Y$, the set
        \begin{equation}
            J = \{i\in I\mid x_i\in Y\setminus X\}
        \end{equation}
        is an element of $\omega$. By \Cref{prop:subtrivialbundles} we can find for each $i\in J$ a $b_i\in \M$ such that $\mathrm{eval}_{x_i}(b_i) = a_i$ and $\|b_i\|\leq \|a_i\|$. Now define $(c_i)_i\in \prod_i^\omega (\M,x_i)$ by
        \begin{equation}
            c_i =\begin{cases}
                b_i \quad \text{if} \quad i\in J\\
                1_\M  \text{ otherwise.}
            \end{cases}
        \end{equation}
        Then an easy computation shows $(\mathrm{eval}_{x_i})^\omega((c_i)_i)=(\mathrm{eval}_{x_i}(c_i))_i = (a_i)_i$ in $M^\omega$.
    \end{enumerate}
\end{proof}

\begin{theorem}
    \label{thm:theonlyone} 
    Let $X$ be a compact metrisable space and $Y\subseteq X$ a closed subset. Suppose $(M,\tau)$ is a full factor with separable predual and $N\subseteq M$ is a full subfactor. Consider the subtrivial $\mathrm{W}^*$-bundle $\N = \{f\in C_\sigma(X,M)\mid f(x)\in N, \forall x\in Y\}.$
    \begin{enumerate}
        \item If $Y$ is clopen, then $\N$ is locally trivial hence has uniform $w$-spectral gap.
        \item If $Y$ is not clopen, then $\N$ has uniform $w$-spectral gap if and only if $N$ is an irreducible subfactor of $M$ and $N$ has $w$-spectral gap in $M$.

    \end{enumerate}
\end{theorem}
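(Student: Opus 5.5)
For part (1), I would use that $Y$ and $X\setminus Y$ are clopen. Every point then has a closed neighbourhood ($Y$ or $X\setminus Y$) on which the bundle should be trivial. Concretely, I would show $\N\lvert_Y\cong C_\sigma(Y,N)$ and $\N\lvert_{X\setminus Y}\cong C_\sigma(X\setminus Y,M)$ via restriction of functions. Since $Y$ is clopen, any $f\in C_\sigma(Y,N)$ extended by $1$ (or by any element of $M$) on $X\setminus Y$ lies in $\N$, so restriction is onto. Its kernel is exactly $I_Y$, and the expectations match. Hence $\N$ is locally trivial. Its fibres $N$ and $M$ are full, so \Cref{cor:locallytriv} gives uniform $w$-spectral gap.

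For part (2), I would work with \Cref{prop:equivchars}(iv): uniform $w$-spectral gap means $\iota^\omega(\N)'\cap\prod^\omega(\N,x_i)=\C1$ for all nets $(x_i)$ and cofinal $\omega$. Set $x=\lim_\omega x_i$ and transport everything to $M^\omega$ via \Cref{lemma:techincallemma}. If $x\notin Y$, the relative commutant becomes $M'\cap M^\omega$, which is $\C1$ because $M$ is full. If $x\in Y$, the lemma identifies $\iota^\omega(\N)$ with $N\subseteq M^\omega$, since $\mathrm{eval}_x$ is onto $N$, and $\prod^\omega(\N,x_i)$ with a subalgebra $P$ satisfying $N^\omega\subseteq P\subseteq M^\omega$. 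I expect $N^\omega\subseteq P$ by a lifting argument, since the fibres always contain $N$; in fact $P$ is the whole image of $(\mathrm{eval}_{x_i})^\omega$. So the relative commutant is $N'\cap P$, squeezed between $N'\cap N^\omega$ and $N'\cap M^\omega$.

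\emph{If direction.} Assume $N'\cap M=\C1$ and $N$ has $w$-spectral gap in $M$. Then \Cref{prop:wspecgapincl} gives $N'\cap M^\omega=(N'\cap M)^\omega=\C1$, so $N'\cap P=\C1$ in every case, and $\N$ has uniform $w$-spectral gap.

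\emph{Only if direction.} This is where non-clopenness enters. Since $Y$ is closed but not open, there is $y\in Y$ which is a limit of a sequence $x_n\in X\setminus Y$; here I use that $X$ is metrisable. For this sequence and a free ultrafilter on $\mathbb N$, the "furthermore" part of \Cref{lemma:techincallemma}(1) makes $(\mathrm{eval}_{x_n})^\omega$ an isomorphism onto $M^\omega$. The relative commutant is then exactly $N'\cap M^\omega$, which must equal $\C1$. This gives $N'\cap M=\C1$ by taking constant sequences, and $N'\cap M^\omega=\C1=(N'\cap M)^\omega$. By \Cref{prop:sepultrafilber} (separable predual), $N$ then has $w$-spectral gap in $M$.

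The main obstacle I anticipate is bookkeeping in the $x\in Y$ case of the "if" direction. I need to confirm that the relative commutant of $\iota^\omega(\N)$ corresponds precisely to a subset of $N'\cap M^\omega$. This requires injectivity of $(\mathrm{eval}_{x_i})^\omega$ and the commutative diagram, both supplied by \Cref{lemma:techincallemma}; no surjectivity is needed there, since inclusion in $N'\cap M^\omega=\C1$ suffices.
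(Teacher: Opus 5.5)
Your proposal is correct and follows essentially the same route as the paper. For part (1) you identify $\N\lvert_Y\cong C_\sigma(Y,N)$ and $\N\lvert_{X\setminus Y}\cong C_\sigma(X\setminus Y,M)$. For part (2) you combine \Cref{prop:equivchars}(iv) with \Cref{lemma:techincallemma}, using \Cref{prop:wspecgapincl} for the ``if'' direction, and for the ``only if'' direction a sequence in $X\setminus Y$ converging to a point of $Y$ together with \Cref{prop:sepultrafilber}. Your intermediate algebra $P\supseteq N^\omega$ is harmless but unnecessary; the inclusion holds simply via constant functions, and, as you note yourself, injectivity of $(\mathrm{eval}_{x_i})^\omega$ and the commuting diagram already suffice.
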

\begin{proof}\leavevmode
    \begin{enumerate}[wide, labelwidth=!, labelindent=0pt]
        \item Suppose $Y$ is clopen. Then both $Y$ and $X\setminus Y$ are closed, \Cref{prop:subtrivialbundles} then implies $\N\lvert_Y \cong C_\sigma(Y,N)$ and $\N\lvert_{X\setminus Y}\cong C_\sigma(X\setminus Y, M)$ as $\mathrm{W}^*$-bundles. Hence $\N$ is locally trivial.
        \item Assume that $Y$ is not clopen. Firstly suppose that $\N$ has uniform $w$-spectral gap. Since $Y$ is not open, we can find a sequence $(x_n)_{n\geq 1} $ in $X\setminus Y$ that converges to some point $x\in Y$. Then for any cofinal ultrafilter $\omega$ on $\mathbb N$, we have $N'\cap M^\omega = \C1$ by combining \Cref{prop:equivchars} $(iv)$ with \Cref{lemma:techincallemma} (1). Thus we definitely have $N'\cap M = \C1_M$, that is $N\subseteq M$ is irreducible. Furthermore, by \Cref{prop:sepultrafilber} $N$ has $w$-spectral gap inside $M$.

        Conversely suppose $N\subseteq M$ is irreducible and that $N$ has $w$-spectral gap inside $M$. Let $(x_i)_{i\in I}$ be a net in $X$ and $\omega$ a cofinal ultrafilter on $I$. Since $M$ is assumed to be full, \Cref{lemma:techincallemma} combined with \Cref{prop:wspecgapincl} implies $\iota^\omega(\N)'\cap \prod_{i\in I}^\omega(\N,x_i) = \C1$. \Cref{prop:equivchars} $(iv)$ then implies $\N$ has uniform $w$-spectral gap.
    \end{enumerate}
\end{proof}
\begin{remark}
    Without the separability and metrisability assumptions \Cref{thm:theonlyone} is still partially true. Indeed, the case where $Y$ is clopen holds in general (that is for a general compact Hausdorff space $X$ and full factor $(M,\tau)$), by exactly the same proof as above.
    
    The case where $Y$ is not clopen is only partially true without separability and metrisability. Without these assumptions, the same argument as in the above proof shows that $N\subseteq M$ being irreducible and $N$ having $w$-spectral gap inside $M$ is sufficient for $\N$ having uniform $w$-spectral gap. Finally, a slight alteration of the argument (taking a net $(x_i)_i$ rather than a sequence), shows that $N\subseteq M$ being irreducible is also a necessary condition.
\end{remark}
\begin{remark}
    Notice that in \Cref{thm:theonlyone} we restrict our attention to closed subsets $Y\subseteq X$. A priori one could consider more general subtrivial bundles of the form $\N_0 = \{f\in C_\sigma(X,M)\mid f(x)\in N,\forall x \in Y_0\}$ where $Y_0$ is a general subset of $X$. However, since the elements of $C_\sigma(X,M)$ are $\|\cdot\|_{2,\tau}$-continuous and $N\subseteq M$ is $\|\cdot\|_{2,\tau}$-closed we have
    $\N_0 = \{f\in C_\sigma(X,M)\mid f(x)\in N, \forall x\in \overline{Y_0}\},$
    where $\overline{Y_0}$ is the closure of $Y_0$ in $X$.
\end{remark}
\begin{corollary}\label{cor:theexample}
    Let $(M,\tau)$ be a full factor that contains a non-irreducible subfactor $N\subseteq M$ that is isomorphic to $M$. Then the subtrivial bundle $$\N = \left\{f\in C_\sigma(\mathbb{N}\cup \{\infty\},M)\mid f(\infty)\in N\right\},$$ does not have uniform $w$-spectral gap. As a consequence $\N$ is a factorial $\mathrm{W}^*$-bundle with each fibre isomorphic to $M$ that is not locally trivial.
\end{corollary}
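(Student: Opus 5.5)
We will apply \Cref{thm:theonlyone} with $X=\mathbb{N}\cup\{\infty\}$ (the one-point compactification of $\mathbb{N}$), which is compact and metrisable, and with $Y=\{\infty\}$. This $Y$ is closed. It is not open, because $\infty$ is the limit of the sequence $n\to\infty$ in $X\setminus Y$. Hence $Y$ is not clopen, and part (2) of \Cref{thm:theonlyone} applies.

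The hypotheses are satisfied. $M$ is a full factor, and $N\cong M$ is therefore a full subfactor. We assume, as \Cref{thm:theonlyone} requires, that $M$ has separable predual. The corollary should either include this assumption or note that it holds in the intended application $M=M_2(L\mathbb{F}_\infty)$. Without separability, the Remark after \Cref{thm:theonlyone} still gives the needed direction: irreducibility of $N\subseteq M$ is necessary for uniform $w$-spectral gap. In either case, since $N'\cap M\neq\C1$ by assumption, $N$ is not irreducible in $M$, so $\N$ does not have uniform $w$-spectral gap.

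Next we identify the fibres. By \Cref{ex:subtrivial} and \Cref{prop:subtrivialbundles}, $\N_n\cong M$ for $n\in\mathbb{N}$ and $\N_\infty\cong N\cong M$. Thus every fibre is isomorphic to the factor $M$, so $\N$ is factorial and all its fibres are full.

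Finally, suppose $\N$ were locally trivial. Then \Cref{cor:locallytriv}, applied to this locally trivial factorial bundle with full fibres, would give $\N$ uniform $w$-spectral gap, a contradiction. Hence $\N$ is not locally trivial. The main point needing care is the separability hypothesis; apart from that, the argument only combines earlier results.
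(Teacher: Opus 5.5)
Your proof is correct and follows the paper's intended route: apply \Cref{thm:theonlyone}(2) with $X=\mathbb{N}\cup\{\infty\}$ and $Y=\{\infty\}$, identify the fibres via \Cref{ex:subtrivial} and \Cref{prop:subtrivialbundles}, and deduce that $\N$ is not locally trivial from \Cref{cor:locallytriv}. You are right that the corollary omits the separable-predual hypothesis of \Cref{thm:theonlyone}, and you handle it correctly: the direction you need only uses $N'\cap M\subseteq N'\cap M^\omega=\C1$, which requires no separability, as the Remark after the theorem notes.
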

\begin{example}\label{ex:nonloctrivial}
    A very concrete example of a factorial $\mathrm{W}^*$-bundle that is not locally trivial but has all fibres isomorphic to some fixed factor can now be constructed as follows. Consider the free group factor $L\mathbb{F}_\infty$, it is well-known that $L\mathbb{F}_\infty \cong M_2(L\mathbb{F}_\infty)$, \cite{voiculescu}. The diagonal embedding $L\mathbb{F}_\infty\subseteq M_2(L\mathbb{F}_\infty)$  is of course not irreducible.
    As such the $\mathrm{W}^*$-bundle
    \begin{equation}
        \N = \{f\in C_\sigma(\mathbb{N}\cup\{\infty\}, M_2(L\mathbb{F}_\infty ))\mid f(\infty)\in L\mathbb{F}_\infty\subseteq M_2(L\mathbb{F}_\infty)\},
    \end{equation}
    is not locally trivial.
\end{example}
We end with some remarks.
\begin{remark}
    Since the hyperfinite $\mathrm{II}_1$ factor $R$ also satisfies $M_2(R)\cong R$, one could naively mimic the construction of \Cref{ex:nonloctrivial} with $R$ instead of $L\mathbb{F}_\infty$ and hope to obtain a $\mathrm{W}^*$-bundle that is not locally trivial. However, the bundle obtained in that way will(!) be trivial. Indeed by Ozawa's triviality result \cite{OzawaWbundle}, if there exists a non-trivial $\mathrm{W}^*$-bundle with all fibres isomorphic to $R$, then the base space must have infinite covering dimension. The (non)-existence of such a bundle is not known.
\end{remark}
\begin{remark}
    It would be interesting to see whether there are non-locally trivial $\mathrm{W}^*$-bundles with all fibres isomorphic to some fixed factor that do have uniform $w$-spectral gap. However, as of now, we know of no other obstruction for such bundles being locally trivial than the negation of the uniform $w$-spectral gap property.
\end{remark}
\begin{remark}
    Similarly as how Popa defines the two notions of having $w$-spectral gap and having spectral gap for inclusions of von Neumann algebras \cite{PopaSpgap}, one could also define a stronger version of having spectral gap for $\mathrm{W}^*$-bundles. That is replacing $(\M)_1$ with $\M$ in \Cref{def:uniwspectralgap}, we would call this property having \emph{uniform spectral gap} (again following the naming scheme set out in \cite{PopaSpgap}). Just as for inclusions of von Neumann algebras, one can use the example in the appendix of \cite{IoanaVaes} to show that also for $\rm{W}^*$-bundles the property of having uniform spectral gap is in general strictly stronger than having uniform $w$-spectral gap. However, many of the results in this paper also seem to have a uniform spectral gap version, this will be further investigated in subsequent work.
\end{remark}
\bibliographystyle{amsalpha}
\bibliography{bibfile}
\end{document}